\definecolor{voros}{cmyk}{0,1,1,0.25}
\title{Posets are easily testable}
\author{
Panna T\'imea Fekete\thanks{Institute of Mathematics, E\"otv\"os Lor\'and University, POB 120, H-1518 Budapest, Hungary \\
and HUN-REN Alfr\'ed R\'enyi Institute of Mathematics, POB 127, H-1364 Budapest, Hungary.} \thanks{
E-mail: {\tt fekete.panna.timea@renyi.hu}.}
\and
G\'abor Kun \protect\footnotemark[1]
}
\newcommand{\shorttitle}{\@title}
\newtheorem{theorem}{Theorem}[section]
\newtheorem{lemma}[theorem]{Lemma}
\newtheorem{proposition}[theorem]{Proposition}
\newtheorem{corollary}[theorem]{Corollary}
\theoremstyle{definition}
\theoremstyle{remark}
\newtheorem{remark}[theorem]{Remark}
\theoremstyle{remark}
\newtheorem{claim}[theorem]{Claim}
\theoremstyle{remark}
\newtheorem*{claim*}{Claim}
\newenvironment{thmenum}
 {\begin{enumerate}[label=\upshape(\arabic*)]}
 {\end{enumerate}}
\begin{document}

\thispagestyle{empty}
\maketitle

\begin{abstract}\large{Alon and Shapira proved that every monotone class (closed under taking subgraphs) of undirected graphs 
is strongly testable, that is, under the promise that a given graph is either in the class or $\varepsilon$-far from it, there is a test using a constant number of samples (depending on $\varepsilon$ only) that rejects every graph not in the class with probability at least one half, and always accepts a graph in the class. However, their bound on the number of samples is quite large since they heavily rely on Szemer\'edi's regularity lemma. We study the case of posets and show that every monotone class of posets is easily testable, that is, a polynomial (of $\varepsilon^{-1}$) number of samples is sufficient. We achieve this via proving a polynomial removal lemma for posets.

We give a simple classification: for every monotone class of posets, there is an $h$ such that the class is indistinguishable (every large enough poset in one class is $\varepsilon$-close to a poset in the other class) from the class of $C_h$-free posets, where $C_h$ denotes the chain with $h$ elements.
This allows us to test every monotone class of posets using $O(\varepsilon^{-1})$ samples.
The test has a two-sided error, but it is almost complete: the probability of refuting a poset in the class is polynomially small in the size of the poset.

The analogous results hold for comparability graphs, too.}\end{abstract}

\textbf{Keywords:} Property testing, polynomial removal lemma, poset

\section{Introduction}

The relationship between local and global properties of structures is a central theme in combinatorics and computer science. Since the work of Rubinstein and Sudan \cite{RS96}, testing properties by sampling a small number of elements is an emerging research area.
A classical result of this kind is the triangle removal lemma by Ruzsa and Szemer\'edi \cite{RS76}, usually stated in the form that if a graph $G$ admits at most $\delta |V(G)|^3$ triangles then it can be made triangle-free by the removal of at most $\varepsilon |V(G)|^2$ edges, where $\delta$ depends only on $\varepsilon$. This can be applied to obtain a combinatorial proof of Roth's theorem \cite{R53} on $3$-term arithmetic progressions, while the hypergraph removal lemma has been used to prove Szemer\'edi's theorem. Removal lemmas were proved for abelian groups by Green \cite{G05}, for linear systems of equations by Kr\'al, Serra and Vena \cite{KSV12}, for local affine-invariant properties by Bhattacharyya, Fischer, Hatami, Hatami and Lovett \cite{BFHHS} and for permutations by Klimo\v{s}ov\'a and Kr\'al \cite{KK14}, and by Fox and Wei \cite{FW18}, as well.

A {\it property} of digraphs is a set of finite digraphs closed under isomorphism. A digraph $G$ is {\it $\varepsilon$-far} from having a property $\Phi$ if any digraph $G'$ on the vertex set $V(G)$ that differs by at most $\varepsilon |V(G)|^2$ edges from $G$ does not have the property $\Phi$ either.
A property $\Phi$ is {\it strongly testable} if for every $\varepsilon>0$ there exists an $f(\varepsilon)$ such that if the digraph $G$ is $\varepsilon$-far from having the property $\Phi$ then the induced directed subgraph on $f(\varepsilon)$ vertices chosen uniformly at random does not have the property $\Phi$ with probability at least one half, and it always has the property if $G$ does.
Alon and Shapira \cite{AS05} proved that every {\it monotone property} of undirected graphs (that is, closed under the removal of edges and vertices) is strongly testable, see Lov\'asz and Szegedy for an analytic approach \cite{LS08}, while R\"odl and Schacht generalized this to hypergraphs \cite{RS09}, see also Austin and Tao \cite{AT10}. Similar results have been obtained for hereditary classes of graphs and other structures, e.g., tournaments and matrices, see Gishboliner for the most recent summary \cite{G20}. We focus on monotone properties and omit the overview of other research directions.

Unfortunately, the dependence on $\varepsilon$ can be quite bad already in the case of undirected graphs: the known upper bounds in the Alon-Shapira theorem are wowzer functions due to the iterated involvement of Szemer\'edi's regularity lemma. Following Alon and Fox \cite{AF15}, we call a property {\it easily testable} if $f(\varepsilon)$ can be bounded by a polynomial of $\frac{1}{\varepsilon}$, else the property is {\it hard}. They showed that both testing perfect graphs and testing comparability graphs are hard. Easily testable properties are quite rare, even triangle-free graphs are hard: Behrend's construction \cite{B46} of sets of integers without $3$-term arithmetic progression leads to a lower bound of magnitude $\varepsilon^{c \log\left(\frac{1}{\varepsilon}\right)}$. Alon proved that $H$-freeness is easily testable in the case of undirected graphs if and only if $H$ is bipartite. For forbidden induced subgraphs, Alon and Shapira gave a characterization \cite{AS06}, where there are very few easy cases. Testability is usually hard for hypergraphs studied by Gishboliner and Shapira \cite{GS22} and ordered graphs investigated by Gishboliner and Tomon \cite{GT21}. An interesting class of properties that are easy to test are semialgebraic hypergraphs, see Fox, Pach and Suk \cite{FPZ16}.
Surprisingly, $3$-colorability and, in general, "partition problems" turned out to be easily testable, see Goldreich, Goldwasser and Ron \cite{GGR98}. 
Even a conjecture to draw the borderline between easy and hard properties seems beyond reach.

The goal of this paper is to study testability of finite posets as special digraphs. By a poset, we mean a set equipped with a partial order $\prec$ that is anti-reflexive and transitive. Alon, Ben-Eliezer and Fischer \cite{ABF17} proved that hereditary (closed under induced subgraphs) classes of ordered graphs are strongly testable. This implies the removal lemma for posets and that monotone classes of posets are strongly testable in the following way. We consider a linear extension $<$ of the ordering $\prec$ of the poset $P$. To every poset with a linear ordering, we can associate the graph on its base set, where distinct elements $x<y$ are adjacent if $x \prec y$ in the poset. A graph with a linear ordering is associated with a poset if and only if it has no induced subgraph with two edges on three vertices, where the smallest and largest vertices are not adjacent.
An alternative to the application of this general result is to follow the proof of Alon and Shapira \cite{AS05} using the poset version of Szemer\'edi's regularity lemma proved by Hladk\'y, M\'ath\'e, Patel and Pikhurko \cite{HMPP15}.

We show that monotone classes of posets (closed under taking subposets) are easily testable. This is equivalent to the following removal lemma with polynomial bounds.

Throughout this paper, we work with finite posets. The {\it height} of a poset $P$ is the length of its longest chain, while the {\it width} is the size of the largest antichain, denoted by $h(P)$ and $w(P)$, respectively.
The chain with $h$ elements is denoted by $C_h$.
Given two posets $P, Q$, a mapping $f: Q \rightarrow P$ is a homomorphism if it is order-preserving, i.e., $f(x) \prec f(y)$ for every $x \prec y$. The probability that a uniform random mapping from $Q$ to $P$ is a homomorphism is denoted by $t(Q,P)$, which we often refer to as the homomorphism density. A poset $P$ is called {\it $Q$-free} if it does not contain $Q$ as a (not necessarily induced) subposet.

\begin{theorem}\label{thm:posetremoval} [Polynomial removal lemma for posets]
Consider an $\varepsilon>0$ and a finite poset $Q$ of height at least two. For every finite poset $P$, if $t(Q,P) <
\left(\frac{\varepsilon}{2}\right)^{h(Q)w(Q)^2}$ then there exists a $Q$-free (moreover, $C_{h(Q)}$-free) subposet of $P$ obtained by the removal of at most $\varepsilon |P|^2$ edges.
\end{theorem}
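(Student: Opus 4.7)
The plan is to reduce the theorem to a polynomial chain-removal lemma via a Sidorenko-type inequality. Write $h=h(Q)$, $w=w(Q)$; for $v\in P$ let $U(v)=\{x\in P:v\prec x\}$ and $D(v)=\{x\in P:x\prec v\}$, extend these to tuples by intersection, and set $u(\vec a)=|U(\vec a)|/|P|$, $d(\vec a)=|D(\vec a)|/|P|$.

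First, I would prove the Sidorenko-type bound $t(Q,P)\ge t(C_h,P)^{w^2}$. By Dilworth's theorem, $Q$ decomposes into at most $w$ chains of length at most $h$; after padding $Q$ with isolated vertices (which leaves $t(Q,P)$ unchanged), these chains slot into the $h\times w$ grid, giving an order-preserving injection $Q\hookrightarrow C_h[I_w]$, where $C_h[I_w]$ is the lex product that replaces each level of $C_h$ by a $w$-antichain and makes every cross-level pair comparable. Since the relation of $Q$ is contained in that of $C_h[I_w]$ on the same $hw$-vertex set, every homomorphism $C_h[I_w]\to P$ is also a homomorphism $Q\to P$, so $t(Q,P)\ge t(C_h[I_w],P)$. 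The analytic heart is $t(C_h[I_w],P)\ge t(C_h,P)^{w^2}$, proved by induction on $h$: the base $h=2$ comes from a double application of Jensen's inequality,
\[
t(C_2[I_w],P)=\mathbb{E}_{\vec\alpha}\bigl[u(\vec\alpha)^w\bigr]\ge\bigl(\mathbb{E}_{\vec\alpha}\,u(\vec\alpha)\bigr)^w=\bigl(\mathbb{E}_\beta\,d(\beta)^w\bigr)^w\ge t(C_2,P)^{w^2},
\]
and the step conditions on a middle stacked antichain $\vec\alpha_k$, applies the hypothesis inside the intervals $D(\vec\alpha_k)$ and $U(\vec\alpha_k)$, and routes Jensen so that the exponent $w^2$ is preserved.

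Second, I would prove the chain-removal lemma: if $t(C_h,P)<(\varepsilon/2)^h$ then $P$ admits a $C_h$-free subposet by deleting at most $\varepsilon|P|^2$ comparabilities. The base $h=2$ is immediate: $t(C_2,P)<\varepsilon^2/4<\varepsilon$ bounds the edge density, so one deletes every edge. For $h\ge 3$, the identity $t(C_h,P)=\mathbb{E}_\alpha[u(\alpha)^{h-1}\,t(C_{h-1},U(\alpha))]$ separates the vertices into those with small $u(\alpha)$ (whose up-edges are few and deleted outright) and those with large $u(\alpha)$; a Markov-type averaging yields many $\alpha$ with $t(C_{h-1},U(\alpha))<(\varepsilon'/2)^{h-1}$, to which the inductive hypothesis applies on $U(\alpha)$. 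Combining with the Sidorenko inequality, $t(Q,P)<(\varepsilon/2)^{hw^2}$ forces $t(C_h,P)\le t(Q,P)^{1/w^2}<(\varepsilon/2)^h$, and chain removal then produces the required $C_h$-free (hence $Q$-free) subposet.

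I expect the hardest step to be the Sidorenko-type bound $t(C_h[I_w],P)\ge t(C_h,P)^{w^2}$ for $h\ge 3$. A naive iteration of Jensen loses a factor of $w$ in the exponent at each induction step, since the densities $u(\vec\alpha),d(\vec\alpha)$ attached to a tuple $\vec\alpha\in P^w$ do not factor over the entries of $\vec\alpha$. The correct inductive invariant will likely involve $t(C_k,M(\vec a,\vec b))$ on interval sub-posets $M(\vec a,\vec b)=U(\vec a)\cap D(\vec b)$ and a carefully chosen Jensen exponent on the marginal up-set factor, arranged so that the target exponent $w^2$ emerges exactly at the end of the recursion.
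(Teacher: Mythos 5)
Your overall architecture --- reduce $Q$-removal to $C_h$-removal via the embedding $Q\hookrightarrow K_{h\times w}$ and a Sidorenko-type inequality $t(K_{h\times w},P)\ge t(C_h,P)^{w^2}$ --- is exactly the paper's, but both key lemmas are left with genuine gaps. For the Sidorenko inequality, you correctly identify it as the hardest step and then do not prove it: your induction on $h$ with a ``middle stacked antichain'' is precisely the scheme you yourself note loses a factor of $w$ per step, and your closing paragraph only speculates about what the right inductive invariant ``will likely involve.'' No induction on $h$ is needed. The paper's argument is a two-stage edge-disjoint decomposition in the spirit of K\H{o}v\'ari--S\'os--Tur\'an: first $t(K_{w,1,w,1,\dots},P)\ge t(C_h,P)^{w}$ by conditioning on the (singleton) even layers, noting that the $w$ chains through them are conditionally independent, and applying Jensen once; then $t(K_{h\times w},P)\ge t(K_{w,1,w,1,\dots},P)^{w}$ by the symmetric argument conditioning on the odd layers. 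Each stage contributes one factor of $w$ in the exponent, giving $w^2$ in two applications of Jensen total. Your $h=2$ computation is exactly this specialized to chains of length two; the point you are missing is that it generalizes by splitting layers into odd and even rather than by recursing on height. (Minor: the embedding $Q\hookrightarrow K_{h\times w}$ should be justified by the Mirsky height function --- place each element at the level equal to the length of the longest chain ending at it, which makes every level an antichain of size at most $w$ --- not by naively slotting Dilworth chains into columns, since the $i$-th element of a chain need not have height $i$.)

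The second gap is the chain removal lemma. Your proposed induction on $h$ via the identity $t(C_h,P)=\mathbb{E}_\alpha\bigl[u(\alpha)^{h-1}\,t(C_{h-1},U(\alpha))\bigr]$ runs into a double-counting problem you do not address: the up-sets $U(\alpha)$ overlap heavily, so applying the inductive hypothesis inside each $U(\alpha)$ and deleting $\varepsilon'|U(\alpha)|^2$ edges per vertex can cost up to $\varepsilon'|P|^3$ edges in total, and you also do not say what to do with the vertices $\alpha$ for which $t(C_{h-1},U(\alpha))$ is \emph{not} small (Markov only gives you ``many'' good $\alpha$, but a single bad $\alpha$ with surviving up-edges can still head a copy of $C_h$). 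This recursive route is essentially the Alon--Shapira argument for directed paths, which the paper notes yields only exponent $O(h^2)$ even when done carefully, so it cannot deliver the stated constant $\left(\varepsilon/2\right)^{hw^2}$. The paper instead uses a single global pass: fix a linear extension, greedily assign each element $y$ the rank $r(y)=1+\max\{k:|\{x\prec y:r(x)=k\}|\ge\gamma|P|\}$ (and $r(y)=1$ if no such $k$ exists), then delete all edges between equal-rank elements and all edges into elements of rank at least $h$. Each element of rank $r$ tops at least $(\gamma|P|)^{r-1}$ chains, so $t(C_h,P)<\gamma^h$ forces fewer than $\gamma|P|$ elements of rank $\ge h$, and the total deletion is at most $2\gamma|P|^2=\varepsilon|P|^2$ with $\gamma=\varepsilon/2$. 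You would need either this rank-function idea or some other globally consistent deletion scheme to close the argument.
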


We show that $Q$-free posets are easily testable.

\begin{algorithm}[H]
\caption{Basic test for $Q$-free posets}\label{test1}
\begin{algorithmic}
\Require{the poset $P$}
\State $P' \gets $ induced subposet on $|Q|$ elements chosen uniformly at random
\If { $Q$ is a subposet of $P'$} Reject $P$
\Else{} Accept $P$
\EndIf
\end{algorithmic}
\end{algorithm}

This test always accepts a $Q$-free poset, and rejects a poset $P$ with probability at least $t(Q,P)$. If $t(Q,P) < \left(\frac{\varepsilon}{2}\right)^{h(Q)w(Q)^2}$, then by Theorem~\ref{thm:posetremoval} $P$ is not $\varepsilon$-far from being $Q$-free. If $t(Q,P) \geq \left(\frac{\varepsilon}{2}\right)^{h(Q)w(Q)^2}$, then it is sufficient to iterate this test 
$\frac{1}{t(Q,P)} \leq \left(\frac{2}{\varepsilon}\right)^{h(Q)w(Q)^2}$ times independently (i.e., taking $f(\varepsilon) = \left(\frac{2}{\varepsilon}\right)^{h(Q)w(Q)^2} |Q|$ in the definition of easy testability) to reject a poset $\varepsilon$-far from being $Q$-free with probability at least $1-\left(1-t(Q,P)\right)^{\frac{1}{t(Q,P)}} > \frac{1}{2}$. The inequality holds since $0 < t(Q,P) \leq 1$, the function $t \mapsto1-\left(1-t\right)^{\frac{1}{t}}$ is monotone increasing on $(0,1]$ and $\lim_{t\rightarrow 0} 1-\left(1-t\right)^{\frac{1}{t}} = 1-\frac{1}{e}$.

We will consider the family of (possibly infinitely many) finite posets not in the class. To state our precise result, we define the height and width of a set of posets $\mathcal{P}$ as
\[h(\mathcal{P}) = \min_{P\in \mathcal{P}} h(P) \quad \quad \quad w(\mathcal{P}) = \min_{\substack{P\in \mathcal{P}:\\ h(P)=h(\mathcal{P})}} w(P).\]

\begin{corollary} \label{cor:posettesting} [Easy testability for monotone classes of posets]
Consider a family of finite posets $\mathcal{P}$ with
$h(\mathcal{P})\geq 2$. Let $Q\in \mathcal{P}$ with height $h(Q) = h(\mathcal{P})$ and width $w(Q) = w(\mathcal{P})$.
For every $\varepsilon>0$ and finite poset $P$,
if
$t(Q,P) < \left(\frac{\varepsilon}{2}\right)^{h(\mathcal{P})w(\mathcal{P})^2}$ then there exists a $\mathcal{P}$-free (moreover, $C_{h(\mathcal{P})}$-free) subposet of $P$ obtained by the removal of at most $\varepsilon |P|^2$ edges.
\end{corollary}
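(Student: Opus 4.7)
The plan is to derive this corollary as a nearly immediate consequence of Theorem~\ref{thm:posetremoval}. The key point is that the statement picks out a single witness $Q \in \mathcal{P}$ with $h(Q)=h(\mathcal{P})$ and $w(Q)=w(\mathcal{P})$, so the threshold $\left(\frac{\varepsilon}{2}\right)^{h(\mathcal{P})w(\mathcal{P})^2}$ in the hypothesis is exactly the threshold $\left(\frac{\varepsilon}{2}\right)^{h(Q)w(Q)^2}$ appearing in Theorem~\ref{thm:posetremoval}. Since $h(Q)=h(\mathcal{P}) \geq 2$, the hypotheses of Theorem~\ref{thm:posetremoval} are met.

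First, I would apply Theorem~\ref{thm:posetremoval} directly to $Q$ and $P$. This produces a subposet $P'$ of $P$ obtained by removing at most $\varepsilon |P|^2$ edges such that $P'$ is $C_{h(Q)}$-free, equivalently $C_{h(\mathcal{P})}$-free. This already gives the ``moreover'' part of the statement; it remains only to check that $P'$ is also $\mathcal{P}$-free.

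Second, I would verify $\mathcal{P}$-freeness via a height argument. By definition of $h(\mathcal{P})$, every $R \in \mathcal{P}$ satisfies $h(R) \geq h(\mathcal{P})$, so $R$ contains $C_{h(\mathcal{P})}$ as a (not necessarily induced) subposet. Hence if some $R \in \mathcal{P}$ embedded into $P'$ as a subposet, then $C_{h(\mathcal{P})}$ would embed into $P'$ as well, contradicting the $C_{h(\mathcal{P})}$-freeness of $P'$ established in the previous step.

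There is essentially no hard step: the corollary is a straightforward consequence of Theorem~\ref{thm:posetremoval} together with the observation that forbidding a single chain of length $h(\mathcal{P})$ automatically forbids every poset in $\mathcal{P}$. The role of the parameter $w(\mathcal{P})$ in the exponent is only to optimize the bound — one chooses $Q$ of minimal height and, among such posets, of minimal width, so that the exponent $h(Q)w(Q)^2$ coming out of Theorem~\ref{thm:posetremoval} is as small as the height constraint allows.
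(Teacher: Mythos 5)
Your proposal is correct and is essentially identical to the paper's own argument: apply Theorem~\ref{thm:posetremoval} to $Q$ to obtain a $C_{h(\mathcal{P})}$-free subposet, then note that every member of $\mathcal{P}$ has height at least $h(\mathcal{P})$ and hence contains $C_{h(\mathcal{P})}$, so the subposet is automatically $\mathcal{P}$-free.
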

Observe that by Theorem~\ref{thm:posetremoval} there exists a $C_{h(Q)}$-free subposet of $P$ obtained by the removal of at most $\varepsilon |P|^2$ edges. Since every poset in $\mathcal{P}$ contains $C_{h(\mathcal{P})}$, this subposet is also $\mathcal{P}$-free, hence Corollary~\ref{cor:posettesting} holds.

Chains will play an important role in more efficient tests for monotone classes of posets:
we give a simple classification of these classes from the testing point of view. Two properties $\Phi_1$ and $\Phi_2$ of posets are {\it indistinguishable} if for every $\varepsilon>0$ and $i=1,2$ there exists $N$ such that for every poset $P$ on at least $N$ elements with property $\Phi_i$ there exists a poset $P'$ on the same set with property $\Phi_{3-i}$ obtained by changing at most $\varepsilon |P|^2$ edges of $P$. Since we are interested in monotone properties, we only need to allow deleting edges and not adding them.

\begin{theorem} \label{thm:indistinguish} [Indistinguishability]
Consider a family of finite posets $\mathcal{P}$, set $h=h(\mathcal{P}) \geq 2$ and $w=w(\mathcal{P})$. The class of
$\mathcal{P}$-free posets and the class of $C_h$-free posets are indistinguishable.
Namely, every $C_h$-free poset is $\mathcal{P}$-free, and if a poset $P$ is $\mathcal{P}$-free then it has a $C_h$-free subposet obtained by the removal of 
at most $2 \left(\frac{h^2 w^2}{|P|}\right)^{\frac{1}{hw^2}}|P|^2$ edges. 
\end{theorem}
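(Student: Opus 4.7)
The first assertion is immediate: every $Q'\in\mathcal{P}$ has height at least $h=h(\mathcal{P})$, so contains $C_h$ as a (not necessarily induced) subposet, and hence every $C_h$-free poset is automatically $\mathcal{P}$-free.

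For the quantitative second statement my strategy is to reduce to Corollary~\ref{cor:posettesting} applied to a single witness $Q\in\mathcal{P}$ with $h(Q)=h$ and $w(Q)=w$ (such a $Q$ exists by the definition of $h(\mathcal{P})$ and $w(\mathcal{P})$). The key observation is that a $\mathcal{P}$-free poset $P$ must be $Q$-free, i.e.\ admit no injective order-preserving map $Q\to P$, so \emph{every} homomorphism $Q\to P$ is forced to identify at least one pair of distinct elements of $Q$. Counting crudely via Mirsky's theorem, $|Q|\leq h(Q)\cdot w(Q)=hw$, and for each fixed pair $a\neq b$ in $Q$ the number of maps $Q\to P$ with $f(a)=f(b)$ is at most $|P|^{|Q|-1}$, so a union bound over the $\binom{|Q|}{2}$ pairs yields
\[
t(Q,P)\;\leq\;\binom{|Q|}{2}\frac{1}{|P|}\;\leq\;\frac{h^2w^2}{2|P|}.
\]

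I then choose $\varepsilon:=2\bigl(h^2w^2/|P|\bigr)^{1/(hw^2)}$, so that $(\varepsilon/2)^{hw^2}=h^2w^2/|P|>t(Q,P)$, and apply Corollary~\ref{cor:posettesting} to produce a $C_h$-free subposet of $P$ (which is automatically $\mathcal{P}$-free by the first assertion) obtained by deleting at most $\varepsilon|P|^2=2(h^2w^2/|P|)^{1/(hw^2)}|P|^2$ edges. The inequality to be verified, $(\varepsilon/2)^{hw^2}>t(Q,P)$, is strict with room to spare, so no delicate optimization is needed.

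The genuine work has already been done in Theorem~\ref{thm:posetremoval}; the only step specific to this theorem is the density bound $t(Q,P)=O(1/|P|)$ on $Q$-free posets, and I do not foresee an obstacle there once one notes that every homomorphism $Q\to P$ must collapse a pair of incomparable elements of $Q$. The only minor bookkeeping is to make sure that the exponent in the chosen $\varepsilon$ matches the stated bound in the theorem, which it does by direct substitution.
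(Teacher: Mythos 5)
Your proposal is correct and follows essentially the same route as the paper: bound $t(Q,P)\leq\binom{|Q|}{2}|P|^{-1}$ using that a $Q$-free poset admits no injective homomorphism from $Q$, note $|Q|\leq hw$, and feed $\varepsilon=2(h^2w^2/|P|)^{1/(hw^2)}$ into the polynomial removal lemma (Theorem~\ref{thm:posetremoval}). The only cosmetic differences are that you invoke Mirsky's theorem explicitly for $|Q|\leq hw$ and keep the slightly sharper factor $\binom{|Q|}{2}$ rather than $|Q|^2$, neither of which changes the argument.
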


In other words, for every $\mathcal{P}$-free poset $P$ on at least $N = h^2w^2(\varepsilon/2)^{-hw^2}$ elements there exists a $C_h$-free (not necessarily induced) subposet $P'$ obtained by the removal of at most $\varepsilon |P|^2$ edges.

Theorem \ref{thm:indistinguish} motivates a better understanding of the removal lemma for chains and the testing of $C_h$-free posets.
First, we study the basic test with one-sided error.
We can also use this test for $C_h$-free posets to test $\mathcal{P}$-free posets, where $h=h(\mathcal{P})$. This test is not complete, but the probability of rejecting a $\mathcal{P}$-free poset turns out to be negligible, 
$2 \left(\frac{h^2 w^2}{|P|}\right)^{\frac{1}{hw^2}} \cdot {\binom{h}{2}}$, where $w=w(\mathcal{P})$, since every copy of $C_h$ should contain one of the edges removed in Theorem~\ref{thm:indistinguish}.
If we iterate the test $\left(\frac{2}{\varepsilon}\right)^{h}$ times independently, then the probability of accepting a poset $\varepsilon$-far from being $\mathcal{P}$-free is at most one half by Theorem~\ref{thm:posetremoval}. On the other hand, the probability of rejecting a poset that is $\mathcal{P}$-free is at most $2 \left(\frac{h^2 w^2}{|P|}\right)^{\frac{1}{hw^2}} {\binom{h}{2}}\left(\frac{2}{\varepsilon}\right)^{h}$, and this is negligible if $\varepsilon, h, w$ are fixed and $|P|$ is large enough.

We can get a more efficient test by sampling larger subposets instead of iterating the basic test with a constant number of samples.

\begin{algorithm}[H]
\caption{Subposet test for $C_h$-free posets with $s$ samples}\label{test2}
\begin{algorithmic}
\Require{the poset $P$}
\State $P' \gets $ induced subposet of $s$ elements chosen uniformly at random 
\If {$C_h$ is a subposet of $P'$} Reject $P$
\Else{} Accept $P$
\EndIf
\end{algorithmic}
\end{algorithm}

It turns out that sampling $s=\left\lceil \frac{4\log(h)+4}{2\varepsilon} \right\rceil$ elements is enough to reject posets $\varepsilon$-far from being $C_h$-free with probability at least one half, while we always accept $C_h$-free posets. 

By Theorem~\ref{thm:indistinguish} this test can also be used for testing $\mathcal{P}$-free posets, where $h(\mathcal{P})=h$: it rejects posets $\varepsilon$-far from $\mathcal{P}$-free with probability at least one half
at the price of allowing the error of rejecting a $\mathcal{P}$-free poset with negligible probability.

\begin{theorem} \label{thm:optsample} [The subposet test]
Let $h\geq2$ be an integer,
$\varepsilon>0, c>0$ and $P$ a finite poset. If $P$ is $\varepsilon$-far from being $C_h$-free then a random subset of $\left\lceil \frac{4\log(h)+4c+1}{2\varepsilon} \right\rceil$ elements chosen independently and uniformly at random contains a copy of $C_h$ with probability at least $1-e^{-c}$.
\end{theorem}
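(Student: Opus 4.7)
The plan is to combine the polynomial density bound of Theorem~\ref{thm:posetremoval} with a structural union bound. Applying Theorem~\ref{thm:posetremoval} with $Q = C_h$ (so $h(Q) = h$, $w(Q) = 1$) gives $t(C_h, P) \ge (\varepsilon/2)^h$, and hence $P$ contains at least $(\varepsilon|P|/2)^h$ ordered $h$-chains. Letting $X$ denote the number of $h$-subsets of the random sample $S$ that form a chain in $P$, linearity of expectation gives
\[
E[X] \;=\; \binom{s}{h}\cdot h!\cdot t(C_h,P) \;\ge\; \bigl((s-h+1)\varepsilon/2\bigr)^h,
\]
and the choice $s = \lceil (4\log h+4c+1)/(2\varepsilon)\rceil$ makes this at least $(\log h + c)^h$.

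The main step is to pass from this mean bound to $\Pr[X\ge 1]\ge 1 - e^{-c}$. The structure of the target sample size, $s\varepsilon = 2\log h + 2c + 1/2$, points to a union bound over $h$ events of probability at most $e^{-s\varepsilon/2}$. I would therefore aim to extract from the $\varepsilon$-farness hypothesis a layered decomposition: pairwise disjoint subsets $A_1,\dots,A_h \subseteq V(P)$, each of size at least $(\varepsilon/2)|P|$, with every $u \in A_i$ below every $v \in A_j$ for $i < j$. Given such $A_i$, the probability of missing a particular layer is $(1-\varepsilon/2)^s \le e^{-s\varepsilon/2}$, summing over $h$ layers to at most $h e^{-s\varepsilon/2}\le e^{-c}$; on the complementary event, any transversal of the layers is a chain of length $h$ in $S$.

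To construct such layers, the plan is to prune $V(P)$ from the top down: choose $A_h$ of prescribed size, then define $A_{h-1}$ as the set of vertices below every element of $A_h$, and so on; whenever a candidate set falls below the $(\varepsilon/2)|P|$ threshold, the edges one would remove to destroy every violating $C_h$ amount to less than $\varepsilon|P|^2$, contradicting the $\varepsilon$-farness of $P$. The main obstacle is that the naive "below every element of $A_{i+1}$" rule may yield empty $A_i$, so more subtle thresholds (perhaps fractional, in the sense that a large majority rather than all of $A_{i+1}$ is above $A_i$) must be used. The delicate point is balancing the per-level edge-removal budget so that its total stays strictly below $\varepsilon|P|^2$; as a fallback for posets whose chain structure is too tangled for a clean layered decomposition, one can invoke a concentration argument (second moment or Janson's inequality) on the chain indicators, which requires controlling the pair-overlap term $\sum_{T\ne T',\,T\cap T'\ne\emptyset}\Pr[T\text{ and }T'\text{ both chains in }S]$.
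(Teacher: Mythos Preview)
Your outline has a genuine gap at exactly the point you yourself flag. A global layered decomposition $A_1,\dots,A_h$ with every element of $A_i$ below every element of $A_j$ for $i<j$ simply need not exist in a poset that is $\varepsilon$-far from being $C_h$-free; think of many long chains that interleave but are pairwise incomparable. Relaxing to ``fractional'' thresholds does not obviously help either: if each $u\in A_i$ is only below most of $A_{i+1}$, then a transversal of the layers hit by the sample is no longer guaranteed to be a chain, and you lose the clean union bound. Your Janson fallback would need a bound on $\sum_{T\cap T'\neq\emptyset}\Pr[T,T'\text{ chains}]$ that you have not supplied, and in fact the dependence of that sum on $\varepsilon$ and $h$ is not obviously good enough to recover the linear-in-$\varepsilon^{-1}$ sample size.

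The paper's fix is to replace the static decomposition by an \emph{adaptive} one. Run Algorithm~\ref{alg:rank} with $\gamma=\varepsilon/2$ to get the rank function $r$. The contrapositive of Claim~\ref{claim:alg1}\ref{claim_removed_h} forces $\{x:r(x)=h\}$ to have size at least $\gamma|P|$, so the sample hits it except with probability $<e^{-c}/h$; let $x_1$ be the first such element in a fixed linear extension. Now the definition of the rank function guarantees that $x_1$ alone has $\ge\gamma|P|$ predecessors of rank $h-1$, so the remaining sample hits that set except with probability $<e^{-c}/h$; pick the first such $x_2$, and iterate. The point is that the large pool for $x_{k+1}$ is produced by $x_k$ \emph{after} it is chosen, so no simultaneous layering is needed. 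A short claim checks that, conditioned on $x_1,\dots,x_k$ being the first hits in their respective pools, the unused sample points are still uniform on the admissible set, and then a union bound over the $h$ steps finishes. This sequential use of the rank function is the idea your proposal is missing.
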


Observe that being $\varepsilon$-far from every $C_h$-free poset guarantees that $\varepsilon$ is small, so the number of samples will be large enough.

\begin{remark}
Every poset $P$ is $\frac{1}{2h-2}$-close to be $C_h$-free.
\end{remark}

\begin{proof}
Every poset can be extended to a linear ordering. Partition the poset $P$ into $(h-1)$ intervals of equal size and remove the edges inside the intervals: this gives a $C_h$-free poset $\frac{1}{2h-2}$-close to $P$.
\end{proof}

For any fixed $h$, our bound gives the right order of magnitude (in $\varepsilon$) on the necessary number of samples for one-sided testing of $C_h$-free posets, see Proposition~\ref{prop:subposet_sharp}.

The comparability graph $G$ associated with a poset $P$ has vertex set $V(G)=P$ and edge set $E(G)=\{(x,y): x \prec y \text{ or } y \prec x\}$.
Alon and Fox proved that it is hard to test if a given graph is a comparability graph \cite{AF15}.
However, under the promise that the input graph is a comparability graph, we can test monotone classes, even though we do not know the underlying poset. All of our results apply to testing monotone classes of comparability graphs, see Section~\ref{section:cograph}.

In a subsequent work, we prove that the exact degree is $(h-1)$ in the polynomial removal lemma for chains (and many other structures). Proposition~ \ref{prop:removal_sharp}
shows that this is sharp. The proof is too technical for this paper to detail here.

In Section~\ref{section:testingchains}, we prove the polynomial removal lemma for chains and Theorem~\ref{thm:optsample}. Section~\ref{section:testing} contains the proofs of Theorem~\ref{thm:posetremoval} and Theorem~\ref{thm:indistinguish}.
Section~\ref{section:cograph} discusses our results on comparability graphs.

\section{Testing chains}\label{section:testingchains}

First, we prove a removal lemma for chains. 

\begin{lemma}~\label{lemma:chainremoval}[Removal lemma for chains]
For every $\varepsilon>0$, positive integer $h \geq 2$ and every finite poset $P$, if $t(C_h,P)< \big(\frac{\varepsilon}{2}\big)^h$ then there exists a $C_h$-free subposet of $P$ obtained by the removal of at most $\varepsilon |P|^2$ edges of $P$.
\end{lemma}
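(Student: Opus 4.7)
The plan is to argue by induction on $h$.

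For the base case $h=2$, the hypothesis $t(C_2,P) < (\varepsilon/2)^2 \leq \varepsilon$ forces $|E(P)| < \varepsilon|P|^2$ in the nontrivial range $\varepsilon\leq 1$, so removing all edges of $P$ produces a $C_2$-free (antichain) subposet with at most $\varepsilon|P|^2$ edges deleted.

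For the inductive step, let $P$ be a finite poset with $n = |P|$ satisfying $t(C_h,P) < (\varepsilon/2)^h$. For each $y \in P$ write $c_{h-1}(y)$ for the number of $(h-1)$-chains in $P$ ending at $y$ and $d^+(y)$ for the up-degree of $y$. Splitting each ordered $h$-chain into its initial $(h-1)$-chain and its top element yields the identity
\[
N_h(P) = \sum_{y \in P} c_{h-1}(y)\,d^+(y) < (\varepsilon/2)^h\,n^h.
\]
I would then run a two-stage cleanup. In stage one, pick a threshold $t$ and let $Y = \{y : c_{h-1}(y)\,d^+(y) > t\}$; by a Markov bound $|Y| < N_h(P)/t$, and deleting the at most $|Y|\cdot n$ edges incident to $Y$ leaves a poset where every surviving $y$ has $c_{h-1}(y)\,d^+(y) \le t$, hence $\min\{c_{h-1}(y), d^+(y)\} \le \sqrt t$. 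In stage two, partition the survivors into $A = \{d^+(y) \le \sqrt t\}$ and $B = \{c_{h-1}(y) \le \sqrt t\}$, delete the at most $n\sqrt t$ out-edges from $A$, and for each $y \in B$ delete one edge from each of the at most $\sqrt t$ surviving $(h-1)$-chains ending at $y$ (at most $n\sqrt t$ further edges in total). A surviving $h$-chain would have its second-to-top vertex outside $Y \cup A$, hence in $B$; but its initial $(h-1)$-chain would then be one we have just broken---contradiction. Balancing the two costs by choosing $t \sim N_h(P)^{2/3}$ yields a total deletion of order $n\,N_h(P)^{1/3}$.

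The main obstacle is the regime $h \ge 4$, where the balanced bound $n\,N_h(P)^{1/3} \le (\varepsilon/2)^{h/3}\,n^{h/3+1}$ is at most $\varepsilon|P|^2$ only for bounded $n$. Here the two-stage cleanup must be iterated, with each iteration cashing in one factor of $\varepsilon/2$ from the $h$-fold slack in the density hypothesis, while one verifies that the accumulated edge-removals across iterations still sum to at most $\varepsilon|P|^2$ and that every $h$-chain is eventually destroyed. Alternatively, one may invoke the inductive hypothesis on the down-sets $D_y$ of low-$c_{h-1}$ vertices (whose $C_{h-1}$-free modification kills every $h$-chain through $y$), provided one can carry out the per-$y$ inductive removals within a single global budget rather than summing $\sum_{y \in B} \varepsilon'\,|D_y|^2$ naively. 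The exponent $h$ in $(\varepsilon/2)^h$ is precisely what supplies the $h$ factors of geometric slack needed for this multi-step argument to telescope to a total removal of $\varepsilon|P|^2$ edges.
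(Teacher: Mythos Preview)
Your proposal has a genuine gap that you yourself flag: for $h\ge 4$ the balanced two-stage bound $n\cdot N_h(P)^{1/3}$ is of order $n^{1+h/3}(\varepsilon/2)^{h/3}$, which exceeds $\varepsilon n^2$ for all large $n$. Neither of your suggested repairs is close to a proof. Iterating the cleanup does not obviously cash in one factor of $\varepsilon/2$ per round, because after a round of deletions you have no control on the new $h$-chain count in terms of the old one by any fixed ratio; what you would need is a quantitative statement relating the chain count after deletion to the chain count before, and that is essentially the lemma you are trying to prove. The per-$y$ induction on the down-sets $D_y$ runs into exactly the obstacle you name: the sets $D_y$ overlap arbitrarily, and there is no mechanism offered to aggregate the individual budgets $\varepsilon'|D_y|^2$ into a single global budget of $\varepsilon|P|^2$.

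There is also a secondary issue you do not mention: the lemma asks for a $C_h$-free \emph{subposet}, i.e., a transitive subrelation. Deleting all edges incident to $Y$ and then all out-edges from $A$ does preserve transitivity, but your final step---deleting one arbitrary edge from each surviving $(h-1)$-chain below each $y\in B$---does not. If you delete $(x_i,x_{i+1})$ while some $z$ with $x_i\prec z\prec x_{i+1}$ keeps both of its edges, the resulting relation is no longer a poset.

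The paper's proof is entirely different, non-inductive, and short. Fix $\gamma=\varepsilon/2$ and sweep through $P$ in a linear extension, assigning each $y$ the rank
\[
r(y)=1+\max\bigl\{k:\ |\{x\prec y:r(x)=k\}|\ge\gamma|P|\bigr\},
\]
with $r(y)=1$ if no such $k$ exists. Now delete every edge $(x,y)$ with $r(x)=r(y)$, and every edge into a vertex of rank $\ge h$. The first deletion costs at most $\gamma|P|^2$ edges, since by construction each $y$ has fewer than $\gamma|P|$ same-rank predecessors. For the second, observe that any vertex of rank $k$ tops at least $(\gamma|P|)^{k-1}$ strictly rank-increasing $k$-chains; hence the hypothesis $t(C_h,P)<\gamma^h$ forces fewer than $\gamma|P|$ vertices of rank $\ge h$ (and none of rank $>h$), so this deletion also costs at most $\gamma|P|^2$. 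Along every surviving edge the rank strictly increases and the target has rank at most $h-1$, which simultaneously gives transitivity and $C_h$-freeness, with at most $2\gamma|P|^2=\varepsilon|P|^2$ edges removed in total.
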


Polynomial removal lemmas for directed paths have already been obtained by Alon and Shapira \cite{AS03}, but their bound is $O(\varepsilon^{h^2})$. 
We could use their result to get a removal lemma for chains with a worse polynomial bound.
However, we improve their bound to degree $h$. This is almost the exact degree, as the following example shows.

\begin{proposition}\label{prop:removal_sharp}
    Consider the integer $h \geq 2$ and $\varepsilon > 0$ such that $\varepsilon^{-1}$ is an integer. Let $P$ be a poset that consists of $\varepsilon^{-1}$ chains of equal size at least $h$ and divisible by $(h-1)$.
    \begin{thmenum}
        \item \label{prop:removal_sharp_1}
        Every subposet obtained by the removal of less than $\frac{1}{2}(\frac{\varepsilon}{h-1}|P|^2-|P|)$ edges from $P$ contains $C_h$ as a subposet, hence $P$ is at least $(\frac{\varepsilon}{2h-2}-\frac{1}{2|P|})$-far from being $C_h$-free.
        \item \label{prop:removal_sharp_2} The inequality $t(C_h,P) < \frac{\varepsilon^{h-1}}{h!}$ holds. 
    \end{thmenum}
\end{proposition}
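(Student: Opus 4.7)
The plan is to reduce both parts to calculations on a single component chain and then sum, since $P$ is a disjoint union of $k:=\varepsilon^{-1}$ pairwise incomparable chains of length $\ell:=\varepsilon|P|$.

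For part~\ref{prop:removal_sharp_1}, I would use that there are no comparabilities between components, so any subposet $P'\subseteq P$ is $C_h$-free if and only if each component is $C_h$-free in $P'$. The task therefore reduces to bounding from below the number of edges one must delete from a single $C_\ell$ to kill every $C_h$. For any $C_h$-free suborder $\leq'$ of $C_\ell$, Mirsky's theorem yields a partition of the base set into $(h-1)$ antichains $A_1,\dots,A_{h-1}$ of $\leq'$; any two elements in the same $A_p$ were originally comparable in the chain, so the number of deleted edges is at least $\sum_{p=1}^{h-1}\binom{|A_p|}{2}$. Convexity of $\binom{\cdot}{2}$ forces this sum to be minimized when $|A_p|=\ell/(h-1)$ for every $p$, giving the per-chain lower bound $(h-1)\binom{\ell/(h-1)}{2}=\frac{\ell(\ell-h+1)}{2(h-1)}$. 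Multiplying by $k$ and substituting $\ell=\varepsilon|P|$ rearranges to $\frac{1}{2}\!\left(\tfrac{\varepsilon}{h-1}|P|^2-|P|\right)$, proving the first assertion. Rewriting the same quantity as $\bigl(\tfrac{\varepsilon}{2h-2}-\tfrac{1}{2|P|}\bigr)|P|^2$ yields the stated farness.

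For part~\ref{prop:removal_sharp_2}, I would count homomorphisms directly. Since any two elements of $C_h$ are related by a chain of strict comparabilities, the image of an order-preserving $f\colon C_h\to P$ lies inside a single component, and a hom into a fixed $C_\ell$ is exactly a strictly increasing $h$-tuple in $\{1,\dots,\ell\}$. So there are $k\binom{\ell}{h}$ homomorphisms among $|P|^h=k^h\ell^h$ maps, hence
\[ t(C_h,P)=\frac{k\binom{\ell}{h}}{k^h\ell^h}=\varepsilon^{h-1}\cdot\frac{\binom{\ell}{h}}{\ell^h}<\frac{\varepsilon^{h-1}}{h!}, \]
using $\binom{\ell}{h}<\ell^h/h!$ which is strict for $h\ge 2$.

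I do not foresee a genuine obstacle: the only non-arithmetic input is Mirsky's theorem, and the convexity step is immediate. The main conceptual move is the reduction to a single chain together with the observation that Mirsky lets us lower-bound deletions by $\sum\binom{|A_p|}{2}$; everything else is direct computation, and one can verify the bound is tight by partitioning each chain into $h-1$ equal consecutive intervals and deleting within each.
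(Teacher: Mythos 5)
Your proof is correct. Part~\ref{prop:removal_sharp_2} is the same computation as the paper's, just phrased as an explicit count ($k\binom{\ell}{h}$ homomorphisms out of $(k\ell)^h$ maps) rather than as a product of the probability $\varepsilon^{h-1}$ of landing in one chain with the conditional probability $\binom{\ell}{h}/\ell^h<1/h!$ of being strictly increasing there. For part~\ref{prop:removal_sharp_1} you take a genuinely different, and somewhat more self-contained, route: the paper passes to the comparability graph (a disjoint union of cliques $K_{\varepsilon|P|}$), observes that a $C_h$-free subposet has a $K_h$-free comparability graph, and invokes Tur\'an's theorem as a black box to get the deletion lower bound $(h-1)\binom{\varepsilon|P|/(h-1)}{2}$ per clique; you instead stay inside the poset, apply Mirsky's theorem to the $C_h$-free suborder to get $h-1$ antichains, note that every pair inside an antichain is a deleted edge of the chain, and finish by convexity of $\binom{\cdot}{2}$. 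The two bounds coincide because the extremal Tur\'an configuration here is realized by a comparability graph, so nothing is lost; your version avoids quoting Tur\'an (at the cost of quoting Mirsky, though for a suborder of a chain the Mirsky partition is easy to exhibit directly via the height function), while the paper's version makes the link to the clique-removal picture that recurs in Section~\ref{section:cograph} explicit. Both arguments use the divisibility of $\varepsilon|P|$ by $h-1$ to avoid floors, and both are complete.
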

\begin{proof}
\ref{prop:removal_sharp_1} The comparability graph of $P$ is the union of $\varepsilon^{-1}$ complete graphs $K_{\varepsilon |P|}$. If $P'$ is a $C_h$-free subposet of $P$, then the corresponding comparability graph is $K_{h}$-free. By Turán's theorem we have to remove at least $(h-1) \binom{\frac{\varepsilon|P|}{h-1}}{2}$ edges from $K_{\varepsilon|P|}$ in order to obtain a $K_h$-free graph. Now \ref{prop:removal_sharp_1} follows, since $\varepsilon^{-1}(h-1) \binom{\frac{\varepsilon|P|}{h-1} }{2}=\frac{\varepsilon}{2h-2}|P|^2-\frac{1}{2}|P|$.

\ref{prop:removal_sharp_2} The probability that all of the $h$ elements are mapped to the same chain is $\varepsilon^{h-1}$. Note that any homomorphism $C_h \rightarrow P$ maps $C_h$ onto an $h$ element chain in $P$, since $P$ is anti-reflexive. The conditional probability that such a bijection preserves the order of the elements is $\frac{1}{h!}$.
\end{proof}

We consider a linear extension $<$ of the ordering $\prec$ of the poset $P$. We may assume that the set of elements of $P$ is $[|P|]=\{1,2\dots, |P|\}$, and the linear ordering $<$ is the ordering of the integers.

The algorithm defines a rank function $r$ on the set of elements, such that if $r(y)=k+1$ for some element $y$, then it has 'many' predecessors $x\prec y$ with $r(x)=k$. Hence, it has 'many' chains $C_{k+1}$ ending at $y$.

\begin{algorithm}[H]
\caption{Rank function $r$}\label{alg:rank}
\begin{algorithmic}
\Require{$\gamma>0$, poset $P$ on $[|P|]$, where if $x\prec y$ then $x<y$}
\For {$y =1, \dots, |P|$}
\If{ 
$ \quad \exists  k: \big|\{x: x\prec y, r(x)=k \}\big| \geq \gamma |P|$} 
 \State $r(y) \gets 1 + \max\left\{ k: \big|\{x: x\prec y, r(x)=k \}\big| \geq \gamma |P|\right\}$
\Else
\State $r(y) \gets 1$ 
\EndIf
\EndFor
\Ensure{Rank function $r: P\rightarrow \mathbb{Z}_+$}
\end{algorithmic}
\end{algorithm}

The following algorithm will remove the edges to get a $C_h$-free poset.

\begin{algorithm}[H]
\caption{Edge removal using the rank function $r$}\label{alg:edgeremoval}
\begin{algorithmic}
\Require{$\gamma>0$, $h \in \mathds{Z_+}$, poset $P$ on $[|P|]$, where if $x\prec y$ then $x<y$}
\State \Call{Algorithm~\ref{alg:rank}}{$\gamma$, $P$}
\For{$x \prec y$}
\If{
$r(x) = r(y)$}
\State $E(P) \gets E(P) \setminus\{(x,y)\}$
\ElsIf {$r(y) \geq h$}
\State $E(P) \gets E(P) \setminus\{(x,y)\}$
\EndIf
\EndFor
\State $P' \gets P$
\Ensure{$P'$ on vertex set $[|P|]$, edge set $E(P') \subseteq E(P)$}
\end{algorithmic}
\end{algorithm}

\begin{figure}
    \centering
\begin{tikzpicture}[line cap=round,line join=round,
x=0.75cm,y=0.75cm, scale=0.35]

\draw [line width=1pt] (-7,-4) -- (-7,-1);
\draw [line width=1pt] (-7,-4) -- (-3,-1);

\draw [line width=1pt] (-3,-4) -- (-7,-1);
\draw [line width=1pt] (-3,-4) -- (-3,-1);
\draw [line width=1pt] (-3,-4) -- (1,-1);

\draw [line width=1pt] (1,-4) -- (-7,-1);
\draw [line width=1pt] (1,-4) -- (-3,-1);
\draw [line width=1pt] (1,-4) -- (5,-1);

\draw [line width=1pt] (5,-4) -- (-3,-1);
\draw [line width=1pt] (5,-4) -- (9,-1);

\draw [line width=1pt] (9,-4) -- (1,-1);

\draw [line width=1pt] (-7,-1) -- (-5,2);
\draw [line width=1pt] (-7,-1) -- (3,2);

\draw [line width=1pt] (-3,-1) -- (-1,2);

\draw [line width=1pt] (1,-1) -- (-5,2);
\draw [line width=1pt] (1,-1) -- (3,2);

\draw [line width=1pt] (5,-1) -- (-1,2);
\draw [line width=1pt] (5,-1) -- (7,2);
\draw [line width=1pt] (5,-1) -- (3,2);

\draw [line width=1pt] (9,-1) -- (7,2);

\draw [line width=1pt] (-5,2) -- (-7,5);
\draw [line width=1pt] (-5,2) -- (-3,5);
\draw [line width=1pt] (-5,2) -- (1,5);

\draw [line width=1pt] (-1,2) -- (-7,5);
\draw [line width=1pt] (-1,2) -- (-3,5);
\draw [line width=1pt] (-1,2) -- (1,5);
\draw [line width=1pt] (-1,2) -- (5,5);

\draw [line width=1pt] (3,2) -- (-3,5);
\draw [line width=1pt] (3,2) -- (1,5);
\draw [line width=1pt] (3,2) -- (5,5);

\draw [line width=1pt] (7,2) -- (9,5);
\draw [line width=1pt] (7,2) -- (5,5);

\draw [line width=1pt] (-7,5) -- (-3,8);

\draw [line width=1pt] (-3,5) -- (-3,8);
\draw [line width=1pt] (-3,5) -- (1,8);

\draw [line width=1pt] (1,5) -- (-3,8);
\draw [line width=1pt] (1,5) -- (5,8);
\draw [line width=1pt] (1,5) -- (1,8);

\draw [line width=1pt] (5,5) -- (1,8);
\draw [line width=1pt] (5,5) -- (5,8);

\draw [line width=1pt] (9,5) -- (5,8);

\draw (-7,-4) node[anchor=north] {1};
\draw (-3,-4) node[anchor=north] {1};
\draw (1,-4) node[anchor=north] {1};
\draw (5,-4) node[anchor=north] {1};
\draw (9,-4) node[anchor=north] {1};

\draw (-7,-1) node[anchor=south] {2};
\draw (-3,-1) node[anchor=south] {2};
\draw (1,-1) node[anchor=south] {2};
\draw (5,-1) node[anchor=south] {1};
\draw (9,-1) node[anchor=south] {1};

\draw (-5,2) node[anchor=south] {3};
\draw (-1,2) node[anchor=south] {2};
\draw (3,2) node[anchor=south] {3};
\draw (7,2) node[anchor=south] {2};

\draw (-7,5) node[anchor=south east] {3};
\draw (-3,5) node[anchor=south east] {4};
\draw (1,5) node[anchor=south west] {4};
\draw (5,5) node[anchor=south west] {3};
\draw (9,5) node[anchor=south] {2};

\draw (-3,8) node[anchor=south] {5};
\draw (1,8) node[anchor=south] {5};
\draw (5,8) node[anchor=south] {4};

\begin{scriptsize}
\draw [fill=black] (-7,-4) circle (5pt);
\draw [fill=black] (-3,-4) circle (5pt);
\draw [fill=black] (1,-4) circle (5pt);
\draw [fill=black] (5,-4) circle (5pt);
\draw [fill=black] (9,-4) circle (5pt);
\draw [fill=black] (-7,-1) circle (5pt);
\draw [fill=black] (-3,-1) circle (5pt);
\draw [fill=black] (1,-1) circle (5pt);
\draw [fill=black] (5,-1) circle (5pt);
\draw [fill=black] (9,-1) circle (5pt);
\draw [fill=black] (-5,2) circle (5pt);
\draw [fill=black] (-1,2) circle (5pt);
\draw [fill=black] (3,2) circle (5pt);
\draw [fill=black] (7,2) circle (5pt);
\draw [fill=black] (-3,8) circle (5pt);
\draw [fill=black] (-7,5) circle (5pt);
\draw [fill=black] (-3,5) circle (5pt);
\draw [fill=black] (5,5) circle (5pt);
\draw [fill=black] (9,5) circle (5pt);
\draw [fill=black] (1,8) circle (5pt);
\draw [fill=black] (5,8) circle (5pt);
\draw [fill=black] (1,5) circle (5pt);
\end{scriptsize}
\end{tikzpicture}
\quad
\quad
\begin{tikzpicture}[line cap=round,line join=round,
x=0.75cm,y=0.75cm, scale=0.35]

\draw [line width=1pt] (-7,-4) -- (-7,-1);
\draw [line width=1pt] (-7,-4) -- (-3,-1);
\draw [line width=1pt] (-7,-4) -- (-1,2);

\draw [line width=1pt] (-3,-4) -- (-7,-1);
\draw [line width=1pt] (-3,-4) -- (-3,-1);
\draw [line width=1pt] (-3,-4) -- (1,-1);
\draw [line width=1pt] (-3,-4) -- (-1,2);

\draw [line width=1pt] (1,-4) -- (-7,-1);
\draw [line width=1pt] (1,-4) -- (-3,-1);
\draw [line width=1pt] (1,-4) -- (-1,2);
\draw [line width=1pt] (1,-4) -- (3,2);
\draw [line width=1pt] (1,-4) -- (7,2);

\draw [line width=1pt] (5,-4) -- (-3,-1);
\draw [line width=1pt] (5,-4) -- (7,2);
\draw [line width=1pt] (5,-4) -- (-1,2);

\draw [line width=1pt] (9,-4) -- (1,-1);

\draw [line width=1pt] (-7,-1) -- (-5,2);
\draw [line width=1pt] (-7,-1) -- (3,2);
\draw [line width=1pt] (-7,-1) -- (-7,5);
\draw [line width=1pt] (-7,-1) to[out=10,in=-90]  (5,5);

\draw [line width=1pt] (-3,-1) to[out=150,in=-90] (-7,5);
\draw [line width=1pt] (-3,-1) -- (-3,5);
\draw [line width=1pt] (-3,-1) to[out=30,in=-90] (1,5);

\draw [line width=1pt] (1,-1) -- (-5,2);
\draw [line width=1pt] (1,-1) -- (3,2);
\draw [line width=1pt] (1,-1) -- (-7,5);
\draw [line width=1pt] (1,-1) to[out=30,in=-90]  (5,5);

\draw [line width=1pt] (5,-1) -- (-1,2);
\draw [line width=1pt] (5,-1) -- (7,2);
\draw [line width=1pt] (5,-1) -- (3,2);
\draw [line width=1pt] (5,-1) -- (5,5);

\draw [line width=1pt] (9,-1) -- (7,2);

\draw [line width=1pt] (-5,2) -- (-3,5);
\draw [line width=1pt] (-5,2) -- (1,5);
\draw [line width=1pt] (-5,2) to[out=10,in=-115]  (5,8);

\draw [line width=1pt] (-1,2) -- (-7,5);
\draw [line width=1pt] (-1,2) -- (-3,5);
\draw [line width=1pt] (-1,2) -- (1,5);
\draw [line width=1pt] (-1,2) -- (5,5);
\draw [line width=1pt] (-1,2) -- (5,8);

\draw [line width=1pt] (3,2) -- (-3,5);
\draw [line width=1pt] (3,2) -- (1,5);
\draw [line width=1pt] (3,2) -- (5,8);

\draw [line width=1pt] (7,2) -- (5,5);




\draw [line width=1pt] (5,5) -- (5,8);

\draw [line width=1pt] (9,5) -- (5,8);

\draw (-7,-4) node[anchor=north] {1};
\draw (-3,-4) node[anchor=north] {1};
\draw (1,-4) node[anchor=north] {1};
\draw (5,-4) node[anchor=north] {1};
\draw (9,-4) node[anchor=north] {1};

\draw (-7,-1) node[anchor=south] {2};
\draw (-3,-1) node[anchor=south] {2};
\draw (1,-1) node[anchor=south] {2};
\draw (5,-1) node[anchor=south] {1};
\draw (9,-1) node[anchor=south] {1};

\draw (-5,2) node[anchor=south] {3};
\draw (-1,2) node[anchor=south] {2};
\draw (3,2) node[anchor=south] {3};
\draw (7,2) node[anchor=south] {2};

\draw (-7,5) node[anchor=south east] {3};
\draw (-3,5) node[anchor=south east] {4};
\draw (1,5) node[anchor=south west] {4};
\draw (5,5) node[anchor=south west] {3};
\draw (9,5) node[anchor=south] {2};

\draw (-3,8) node[anchor=south] {5};
\draw (1,8) node[anchor=south] {5};
\draw (5,8) node[anchor=south] {4};

\begin{scriptsize}
\draw [fill=black] (-7,-4) circle (5pt);
\draw [fill=black] (-3,-4) circle (5pt);
\draw [fill=black] (1,-4) circle (5pt);
\draw [fill=black] (5,-4) circle (5pt);
\draw [fill=black] (9,-4) circle (5pt);
\draw [fill=black] (-7,-1) circle (5pt);
\draw [fill=black] (-3,-1) circle (5pt);
\draw [fill=black] (1,-1) circle (5pt);
\draw [fill=black] (5,-1) circle (5pt);
\draw [fill=black] (9,-1) circle (5pt);
\draw [fill=black] (-5,2) circle (5pt);
\draw [fill=black] (-1,2) circle (5pt);
\draw [fill=black] (3,2) circle (5pt);
\draw [fill=black] (7,2) circle (5pt);
\draw [fill=black] (-3,8) circle (5pt);
\draw [fill=black] (-7,5) circle (5pt);
\draw [fill=black] (-3,5) circle (5pt);
\draw [fill=black] (5,5) circle (5pt);
\draw [fill=black] (9,5) circle (5pt);
\draw [fill=black] (1,8) circle (5pt);
\draw [fill=black] (5,8) circle (5pt);
\draw [fill=black] (1,5) circle (5pt);
\end{scriptsize}
\end{tikzpicture}
\caption{Example for Algorithm~\ref{alg:rank} on the left and Algorithm~\ref{alg:edgeremoval} on the right with $h=5, \gamma = \frac{1}{11}$. The Hasse diagram of the poset $P$ is on the left, the ranks are written on the elements, and the Hasse diagram of $P'$ is on the right.}
  \label{fig:alg1_P}
\end{figure}
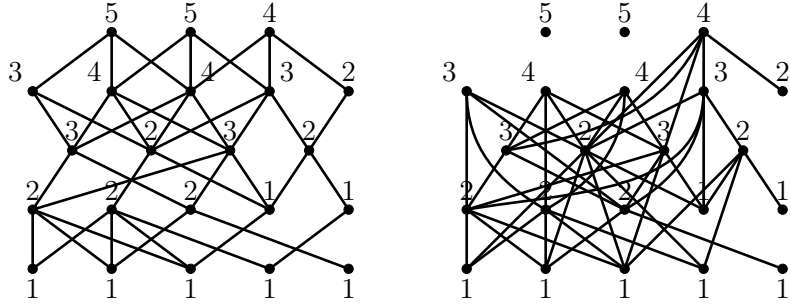

\noindent
{\bf Analysis of Algorithm~\ref{alg:edgeremoval}:}

\begin{claim}\label{claim:alg1} The following holds.
\begin{thmenum}
\item \label{claim_poset} {The output $P'$ is a poset.}

\item \label{claim_chainfree}
{The output poset $P'$ is $C_h$-free.}

\item \label{claim_removed}
{The number of edges $x\prec y$ removed such that $r(x) = r(y)$ is at most $\gamma |P|^2$.}

\item \label{claim_removed_h}
{If the number of elements with rank $r(y) \geq h$ is at most $\gamma|P|$, then the number of edges removed by Algorithm~\ref{alg:edgeremoval}} in order to get a $C_h$-free poset is at most $2\gamma|P|^2$.
\end{thmenum}
\end{claim}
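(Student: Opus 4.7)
The plan is to anchor everything on a single monotonicity property of the rank: if $x \prec_P y$ then $r(y) \geq r(x)$. First I would prove this by induction on $r(x)$. The case $r(x) = 1$ is trivial; if $r(x) = k \geq 2$, then by the definition in Algorithm~\ref{alg:rank} there are at least $\gamma|P|$ elements $z \prec x$ with $r(z) = k-1$, and transitivity of $P$ makes each such $z$ a predecessor of $y$ as well. Since these $z$'s are already ranked by the time $y$ is processed, the set $\{z : z \prec y, r(z) = k-1\}$ also has size at least $\gamma|P|$, forcing $r(y) \geq k$.

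With the monotonicity in hand, part~\ref{claim_poset} follows easily. Any surviving edge $x \prec_{P'} y$ satisfies $r(x) \neq r(y)$ and $r(y) < h$, and combined with $r(y) \geq r(x)$ this strengthens to $r(x) < r(y) < h$. For transitivity, given $x \prec_{P'} y \prec_{P'} z$ I would use transitivity of $P$ to get $x \prec_P z$, and then $r(x) < r(y) < r(z) < h$ shows that neither removal rule of Algorithm~\ref{alg:edgeremoval} fires on $(x,z)$, so $x \prec_{P'} z$. Anti-reflexivity is inherited from $P$. Part~\ref{claim_chainfree} is then immediate: any chain $x_1 \prec_{P'} \cdots \prec_{P'} x_h$ would give $1 \leq r(x_1) < r(x_2) < \cdots < r(x_h) < h$, which is impossible.

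For part~\ref{claim_removed} I would use the maximality in the rank definition: if $|\{x : x \prec y, r(x) = r(y)\}| \geq \gamma|P|$, then Algorithm~\ref{alg:rank} would have assigned to $y$ a rank of at least $r(y)+1$, a contradiction. Hence every $y$ contributes fewer than $\gamma|P|$ removed same-rank edges, and summing over the $|P|$ choices of $y$ gives the bound $\gamma|P|^2$. For part~\ref{claim_removed_h} I would add to this the edges removed by the second rule, i.e.\ those $(x,y)$ with $r(y) \geq h$. Each such $y$ contributes at most $|P|$ removed edges, and by hypothesis there are at most $\gamma|P|$ such $y$, yielding another $\gamma|P|^2$, for a total of $2\gamma|P|^2$.

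The main obstacle is the monotonicity lemma together with the transitivity check in part~\ref{claim_poset}: it is the only place where the transitivity of $P$ (not just its cover relation) is essential, and everything else — $C_h$-freeness, edge counts — reduces to simple counting once the rank is known to be strictly increasing along surviving edges.
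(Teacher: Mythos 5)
Your proposal is correct and follows essentially the same route as the paper: the key monotonicity fact that $r(y)\geq r(x)$ for $x\prec y$ (which the paper derives in one line from transitivity and you spell out via the $\gamma|P|$ predecessors of rank $r(x)-1$), strict increase of $r$ along surviving edges, and the two separate edge counts for the same-rank rule and the $r(y)\geq h$ rule. No gaps.
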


\begin{proof}
\ref{claim_poset} If $x,y,z$ are distinct elements in $P$ with $(x,y) \in E(P')$ and $(y,z) \in E(P')$, then $(x,z) \in E(P)$, and $r(z)<h$, $r(x)<r(y)<r(z)$. Hence $(x,z) \in E(P')$.

\ref{claim_chainfree} Let $x,y$ be distinct elements in $P$ with $(x,y) \in E(P')$. Note that $r(x)\leq r(y)$ by the transitivity in posets, hence $r$ is non-decreasing on every chain in $P$. Every edge with $r(x)=r(y)$ has been removed. Thus, $r$ is strictly increasing on every chain in $P'$. The poset $P'$ is $C_h$-free since the edges ending at those elements, where $r$ is at least $h$, have been removed. 

\ref{claim_removed}
For every $y$, the number of $x \prec y$ with $r(x) = r(y)$ can be at most $\gamma|P|$, else $r(y)$ would be greater than $r(x)$.
So, the number of such removed edges is at most $\gamma|P|^2$.

\ref{claim_removed_h}
This is a straightforward consequence of the algorithm and \ref{claim_removed}.
\end{proof}

\begin{proof}(of Lemma~\ref{lemma:chainremoval})
We run Algorithm~\ref{alg:edgeremoval} with $h, P$ and $\gamma=\frac{\varepsilon}{2}$.
\begin{claim*}
    If $t(C_h,P) < \gamma^h$, then the number of elements with rank $r(y)\geq h$ is strictly less than $\gamma |P|$. In particular, there is no element with rank $(h+1)$.
\end{claim*}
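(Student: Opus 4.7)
The plan is to prove the claim by a lower bound on the number of homomorphisms $C_h \to P$ in terms of the elements of rank at least $h$, then derive a contradiction with the assumed density bound.

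First, I would prove by induction on $j$ the following subclaim: for every element $y \in P$ with $r(y) = j$, the number of homomorphisms $\varphi : C_j \to P$ sending the top element of $C_j$ to $y$ is at least $(\gamma |P|)^{j-1}$. The base case $j = 1$ is immediate: the constant map $y$ counts as one homomorphism, and $(\gamma|P|)^0 = 1$. For the inductive step, if $r(y) = j \geq 2$, then by the definition of the rank function in Algorithm~\ref{alg:rank}, there are at least $\gamma |P|$ elements $x \prec y$ with $r(x) = j - 1$. By induction, each such $x$ is the top element of at least $(\gamma |P|)^{j-2}$ homomorphisms from $C_{j-1}$; extending any of these by placing $y$ on top produces a homomorphism from $C_j$ to $P$ ending at $y$, and distinct such extensions give distinct homomorphisms. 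This yields at least $\gamma |P| \cdot (\gamma |P|)^{j-2} = (\gamma |P|)^{j-1}$ homomorphisms ending at $y$, as desired.

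Next, let $N$ denote the number of elements $y \in P$ with $r(y) \geq h$. Applying the subclaim with $j = h$ (truncating the chain for any element of strictly larger rank, which only helps), every such $y$ is the top of at least $(\gamma |P|)^{h-1}$ homomorphisms $C_h \to P$. Summing over these distinct endpoints,
\[
\#\{\text{hom } C_h \to P\} \;\geq\; N \cdot (\gamma |P|)^{h-1}.
\]
Dividing by $|P|^h$, we get $t(C_h, P) \geq \frac{N}{|P|} \cdot \gamma^{h-1}$. Combined with the hypothesis $t(C_h, P) < \gamma^h$, this yields $N < \gamma |P|$, which is the first part of the claim.

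For the "in particular" part, suppose for contradiction that some element $y$ has $r(y) \geq h + 1$. Then by Algorithm~\ref{alg:rank}, there are at least $\gamma |P|$ elements $x \prec y$ with $r(x) = h$, all of which satisfy $r(x) \geq h$; this contradicts $N < \gamma |P|$. I do not expect any real obstacle here: the only mildly delicate point is that the inductive counting produces genuine (and distinct) homomorphisms of $C_h$, which is automatic because chains in $P$ are injective sequences and the extensions in the inductive step are built by prepending different predecessors.
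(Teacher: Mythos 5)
Your proof is correct and follows essentially the same counting argument as the paper: each element of rank $j$ is the top of at least $(\gamma|P|)^{j-1}$ chains, and comparing with the total number of $C_h$-homomorphisms bounds the number of high-rank elements. The only glossed point is the ``truncation'' for elements of rank strictly above $h$; this is harmless, since the same induction proves the subclaim in the form ``every $y$ with $r(y)\geq j$ ends at least $(\gamma|P|)^{j-1}$ copies of $C_j$'' (an element of rank $j'\geq j\geq 2$ has at least $\gamma|P|$ predecessors of rank exactly $j'-1\geq j-1$), while the paper instead first rules out rank $h+1$ and then counts only the elements of rank exactly $h$.
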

\begin{proof}
\renewcommand\qedsymbol{$\blacksquare$}
    Observe that there are at least $(\gamma |P|)^{r(x)-1}$ chains on $r(x)$ elements ending at $x$ for every $x$ such that $r$ is strictly increasing on these chains.

There is no element where $r$ takes value $(h+1)$ since such an element would be the end of at least $\gamma |P|^h$ chains on at least $(h+1)$ elements, but we do not have so many different chains of length $h$.
By the same reason, the number of elements, where $r$ takes value $h$, is strictly less than $\gamma|P|$.
\end{proof}

\ref{claim_removed_h} of Claim~\ref{claim:alg1} proves the lemma.
\end{proof}

Now we use the rank function defined by Algorithm~\ref{alg:rank} to optimize the number of samples to test $C_h$-free posets.

\begin{proof}(of Theorem~\ref{thm:optsample})
We consider again a linear extension $<$ of the ordering $\prec$ of the poset $P$. We might assume that the set of elements of $P$ is $[|P|]=\{1,2\dots, |P|\}$, and the linear ordering $<$ is the ordering of the integers.
We define $r:P \mapsto \mathbb{Z}_+$ using Algorithm~\ref{alg:rank} with $\gamma = \frac{\varepsilon}{2}$.

Let $X$
be a subset of $\left\lceil\frac{4\log(h) + 4c+1}{2\varepsilon} \right\rceil = \left\lceil\frac{\log(h) + c}{\gamma} + \frac{1}{4\gamma} \right\rceil$ elements chosen uniformly at random from $P$. We prove that with probability at least $(1-e^{-c})$ there is a chain with elements $x_{h} \prec \dots \prec x_{2} \prec x_{1}$ such that $r(x_{k}) = h-k+1$ for all $k\in [h]$. We will find these elements one by one, starting with $x_{1}$.

We show that there are at least $\gamma |P|$ elements $x \in P$ with $r(x)=h$. Suppose for a contradiction that there are less.
Then running Algorithm~\ref{alg:edgeremoval} gives a $C_h$-free poset and by \ref{claim_removed_h} of Claim~\ref{claim:alg1} we removed at most $\varepsilon |P|^2$ edges, contradicting that $P$ was $\varepsilon$-far from being $C_h$-free.

Thus, the probability that we do not choose any element with $r(x)=h$ into the set $X$ is at most $(1-\gamma)^{\gamma^{-1} (\log(h)+c +1/4)}<\frac{e^{-c}}{h}$. Denote by $x_1$ the smallest element (in the linear extension) such that $r(x_{1}) = h$, if there is such an element.

\begin{claim*}
Consider ${x_1}, {x_2}, \dots, {x_k}$ for $k<h$ such that for every $\ell \in \{2, 3, \dots, k\}$ 
 the element $x_{\ell}$ is the smallest (in the linear extension) such that $ r(x_{\ell})=h-\ell+1$ and $x_{\ell} \prec x_{\ell-1}$. Then the conditional distribution on the choice of $x_1, \dots, x_k$ of the other elements of $X$ 
 is uniform on the set
$$S_k := \left\{ x \in P \setminus \{x_{1}, x_2, \dots, x_{k}\}: \forall \ell \in [k] \text{ if } x < x_{\ell} \text{ then } \{r(x) \neq h-\ell+1\} \vee \{x \nprec x_{\ell-1} \}\right\}.$$
\end{claim*}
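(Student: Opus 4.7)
The statement is a standard conditional-uniformity fact for the uniform random $s$-subset $X\subseteq P$, and the plan is to argue that the minimality condition defining the greedy tuple $(x_1,\dots,x_k)$ translates cleanly into a single subset constraint $X\setminus\{x_1,\dots,x_k\}\subseteq S_k$; once that equivalence is in hand, uniformity of $X$ gives the conclusion at once.

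Unfolding the greedy procedure: $x_1$ is the $<$-smallest element of $X$ with $r(x_1)=h$, and for each $\ell\ge 2$, $x_\ell$ is the $<$-smallest element of $X$ satisfying $r(x_\ell)=h-\ell+1$ and $x_\ell\prec x_{\ell-1}$. The minimality of $x_\ell$ means precisely that every $y\in X$ with $y<x_\ell$ violates at least one of the two defining properties, i.e. $r(y)\neq h-\ell+1$ or $y\nprec x_{\ell-1}$ (the $\ell=1$ clause being read so that $x\nprec x_0$ is never satisfied, reducing the constraint to $r(y)\neq h$). Now the previously chosen elements $x_1,\dots,x_{\ell-1}$ have ranks $h,h-1,\dots,h-\ell+2$, all different from $h-\ell+1$, so they satisfy the first clause automatically and do not obstruct the minimality of $x_\ell$; hence the constraint for index $\ell$ really only restricts elements of $X\setminus\{x_1,\dots,x_k\}$. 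Intersecting over $\ell\in[k]$ yields exactly the defining condition of $S_k$, so the event ``the greedy procedure on $X$ outputs $(x_1,\dots,x_k)$'' is the same as the event
\[
\{x_1,\dots,x_k\}\subseteq X \quad \text{and} \quad X\setminus\{x_1,\dots,x_k\}\subseteq S_k.
\]

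Since $X$ was chosen uniformly from $\binom{P}{s}$, conditioning on this event leaves the $(s-k)$-subset $X\setminus\{x_1,\dots,x_k\}$ uniformly distributed among all $(s-k)$-subsets of $S_k$, which is the claim. The only delicate step is the rank-bookkeeping observation showing that previously selected $x_j$ ($j<\ell$) are automatically exempt from the ``$y<x_\ell$'' clause for trivial reasons (their ranks do not match); modulo this bookkeeping the proof is a direct conditional-probability manipulation, and I do not anticipate any serious obstacle.
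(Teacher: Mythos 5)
Your proof is correct and takes essentially the same route as the paper: both arguments establish that the event ``the greedy selection on $X$ yields $(x_1,\dots,x_k)$'' is equivalent to $\{x_1,\dots,x_k\}\subseteq X$ together with $X\setminus\{x_1,\dots,x_k\}\subseteq S_k$, and then conclude by the uniformity of $X$. One cosmetic bookkeeping slip: the elements $x_j$ with $j<\ell$ are exempt from the minimality clause simply because $x_j>x_\ell$ in the linear extension (so the clause ``$y<x_\ell$'' never applies to them), whereas it is the $x_j$ with $j>\ell$ that do satisfy $x_j<x_\ell$ and are exempt by the rank mismatch you describe -- the conclusion is unaffected.
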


\begin{proof}
\renewcommand\qedsymbol{$\blacksquare$}
Note that $X \subseteq S_k \cup \{x_{1}, x_{2}, \dots, x_{k}\}$: else for the smallest $x$ (in the linear extension) such that $x \notin S_k \cup \{x_{1}, x_{2}, \dots, x_{k}\}$ there would be an $\ell$ such that $x<x_{\ell}$, $r(x)=h-\ell+1$ and $x \prec x_{{\ell-1}}$. Hence, we should have chosen $x$ instead of $x_{\ell}$.

On the other hand, the set $X$ could be $S'\cup \{x_{1}, x_{2}, \dots, x_{k}\}$ for any subset $S' \subseteq S_k$ of size $\left\lceil\frac{\log(h) + c}{\gamma} + \frac{1}{4\gamma} \right\rceil-k$.
Since the conditional distribution of $X$ is uniform on these sets, the claim follows.
\end{proof}

Now we show that a suitable $x_{k+1}$ exists with probability at least $1-\frac{e^{-c}}{h}$.

There are at least $\gamma |P|$ elements $x\in P$ (in particular, $x\in P\setminus \{x_{1}, x_2, \dots, x_{k}\}$ by the partial ordering) such that $x \prec x_{k}$ and $r(x)=h-k$ by the definition of the rank function. Let us denote these good candidates for $x_{k+1}$ by $R_{k+1}$.

Since $\varepsilon<\frac{1}{2h-2}$ and $\gamma < \frac{1}{4h-4}$, there are at least $\frac{\log(h)+c}{\gamma}$ elements in $X \setminus \{x_{1}, x_2, \dots, x_{k}\}$.
The probability that none of them is in $R_{k+1}$ is at most $(1-\gamma)^{\gamma^{-1} (\log(h)+c)}<\frac{e^{-c}}{h}$.
Let $x_{k+1}$ be the smallest element (in the linear extension) such that $x_{k+1} \in R_{k+1} \cap X$ if there is such an element.

The union bound yields the theorem.
\end{proof}

The following proposition shows that Theorem~\ref{thm:optsample} gives the right order of magnitude on the number of samples required for one-sided testing.

We denote by $K_{w_1, w_2, \dots , w_k}$ the complete $h$-partite poset: the set of elements consists of pairwise disjoint antichains $A_i$ of size $w_i$ for $1\leq i \leq k$; and $x\prec y$ for $x\in A_i$ and $y\in A_j$ if and only if $i<j$. Let $K_{h\times w}$ be the shorthand notation for $K_{w, w, \dots , w}$ with $hw$ elements. In particular, $K_{h\times 1}$ is the chain $C_h$.

\begin{proposition}\label{prop:subposet_sharp}
    Given $\varepsilon>0$ and the positive integers $h \geq 2, w \geq 1$ such that $\varepsilon w$ is also an integer, consider the poset $P=K_{\varepsilon w, w, w, \dots ,w}$ with $(\varepsilon + h-1) w$ elements.
    \begin{thmenum}
        \item \label{prop:subposet_sharp_1} Every subposet obtained by the removal of less than $\varepsilon w^2$ edges from $P$ contains $C_h$ as a subposet, hence $P$ is at least $\frac{\varepsilon}{(\varepsilon + h-1)^2}$-far from being $C_h$-free.
        
        \item \label{prop:subposet_sharp_2} For any $0 < c < \varepsilon w$ the probability that a random subset with at most $\frac{c}{2\varepsilon}$ elements does not contain $C_h$ as a subposet is at least $e^{-c}$.
    \end{thmenum}
\end{proposition}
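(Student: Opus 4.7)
The plan is to establish both parts by direct counting, exploiting the fact that every copy of $C_h$ in $P=K_{\varepsilon w,w,\dots,w}$ picks exactly one element from each antichain $A_1,A_2,\dots,A_h$, where $|A_1|=\varepsilon w$ and $|A_k|=w$ for $k\geq 2$. In particular, $P$ contains exactly $\varepsilon w^h$ copies of $C_h$, and every one of them meets $A_1$.

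For \ref{prop:subposet_sharp_1}, I would run a double counting of (chain, removed edge) incidences. An edge between $A_i$ and $A_j$ with $i<j$ lies in $\prod_{k\notin\{i,j\}}|A_k|$ chains, which equals $w^{h-2}$ when the edge touches $A_1$ and $\varepsilon w^{h-2}$ otherwise; in either case at most $w^{h-2}$. Hence if $F$ is any set of edges whose removal makes $P$ chain-free, each of the $\varepsilon w^h$ chains must contain an edge of $F$, so $|F|\cdot w^{h-2}\geq \varepsilon w^h$, yielding $|F|\geq \varepsilon w^2$. Dividing by $|P|^2=((\varepsilon+h-1)w)^2$ then gives the $\varepsilon$-farness bound $\varepsilon/(\varepsilon+h-1)^2$.

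For \ref{prop:subposet_sharp_2}, I would observe that a subset containing no element of $A_1$ contains no $C_h$, so it suffices to lower-bound the probability that a uniform random subset of size $s:=\lfloor c/(2\varepsilon)\rfloor$ avoids $A_1$. This probability equals
\[
\prod_{i=0}^{s-1}\left(1-\frac{\varepsilon w}{(\varepsilon+h-1)w-i}\right),
\]
and applying $\ln(1-x)\geq -x/(1-x)$ termwise bounds its logarithm below by $-\sum_{i=0}^{s-1}\varepsilon w/((h-1)w-i)$.

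The main technical step is showing that this sum is at most $c$. The hypothesis $c<\varepsilon w$ together with $s\leq c/(2\varepsilon)$ forces $s<w/2$, hence each denominator $(h-1)w-i$ is at least $(h-1)w/2$ when $h\geq 2$, and the sum is bounded by $2\varepsilon s/(h-1)\leq c/(h-1)\leq c$. I expect this bookkeeping to be the only delicate point: one must verify that $\ln(1-x)\geq -x/(1-x)$ is applicable (which requires $x<1$, guaranteed by $s<w/2$) and that the passage from without-replacement sampling to the clean product bound does not lose a constant factor that would spoil $e^{-c}$ for small values of $h$.
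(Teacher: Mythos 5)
Your proposal is correct and follows essentially the same route as the paper: part \ref{prop:subposet_sharp_1} is the identical double count of (chain, edge) incidences using that every copy of $C_h$ meets each antichain exactly once and each edge lies in at most $w^{h-2}$ chains, and part \ref{prop:subposet_sharp_2} reduces, exactly as in the paper, to lower-bounding the without-replacement probability of missing the first antichain, using the same key observation that the sample size is below $w/2$. The only cosmetic difference is that you estimate the product via $\ln(1-x)\geq -x/(1-x)$ while the paper bounds each factor by $(1+2\varepsilon)^{-1}$ and uses $(1+2\varepsilon)^{1/(2\varepsilon)}<e$; both yield the stated $e^{-c}$ bound.
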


Note that the bound in \ref{prop:subposet_sharp_1} is sharp: if we remove all of the $\varepsilon w^2$ edges between the first two antichains, we obtain a $C_h$-free poset.

\begin{proof}
\ref{prop:subposet_sharp_1} Every edge with an endvertex in the first antichain 
(of size $\varepsilon w$) is contained by exactly 
$w^{h-2}$ chains of height $h$, since we can choose the other elements of the chain from the other antichains arbitrarily. 
On the other hand, an edge not adjacent to the first antichain is contained by $\varepsilon w^{h-2}$ chains. Since $P$ contains $\varepsilon w^h$ chains of height $h$, we need at least $\varepsilon w^2$ edges to cover these.

\ref{prop:subposet_sharp_2} Every subposet isomorphic to $C_h$ has an element in the first antichain. The probability that a subposet on $k \leq \frac{c}{2\varepsilon} < \frac{w}{2}$ elements contains no element of this antichain is

$\prod_{i=1}^{k}\frac{(h-1)w-i+1}{(h-1 +\varepsilon)w-i+1} > \big(\frac{1}{1 +2\varepsilon}\big)^{\frac{c}{2\varepsilon}}>e^{-c}$.
\end{proof}

This gives the right order of magnitude of the number of samples required for the one-sided testing of $C_h$-free posets for every fixed $h$: Theorem~\ref{thm:optsample} shows that using $\left\lceil \frac{4\log(h)+4c+1}{2\varepsilon} \right\rceil$ samples the error probability is at most $e^{-c}$, while Proposition~\ref{prop:subposet_sharp} gives an example where the error is at least $e^{-c}$ when sampling at most $\frac{c}{2\varepsilon}$ elements.

\section{Testing monotone classes of posets}
\label{section:testing}

The following lemma provides a lower bound on the density of the complete $h$-partite poset $K_{h\times w}$ in terms of the density of the chain of length $h$.
The proof is inspired by the counting argument of K\H{o}v\'ari, S\'os and Tur\'an~\cite{KST54} in the proof of the upper bound to the symmetric case of the Zarankiewicz problem (that is, using modern notation, the upper bound on $ex(n,K_{r,r})$).
We use again the notation $[n] = \{1,2,\dots, n\}$.

\begin{lemma} \label{density}
For every poset $P$ and positive integers $h,w$ the inequality
$$t(K_{h \times w}, P) \geq t^{w^2}(C_{h}, P)$$
holds.
\end{lemma}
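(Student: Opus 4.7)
Sample $X_{i,k}$, $i\in[h]$, $k\in[w]$, independently and uniformly from $P$, so that
\[ t(K_{h \times w}, P) = \mathbb{E}\!\left[\prod_{i=1}^{h-1}\prod_{k,l \in [w]} \mathbf{1}[X_{i,k} \prec X_{i+1,l}]\right]. \]
The plan is to bound this below by two applications of Jensen's inequality for $x \mapsto x^w$, mirroring the two convexity steps in the K\H{o}v\'ari--S\'os--Tur\'an count of $K_{s,s}$'s.

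The first idea is to integrate out all odd-indexed levels. For each odd $i$, the indicators involving $X_i$ factor across $k\in[w]$, so integrating yields a clean $w$-th power: for $i=1$, $\mathbb{E}_{X_1}[\prod_{k,l}\mathbf{1}[X_{1,k}\prec X_{2,l}]] = F_1(X_2)^w$ with $F_1(X_2) = \Pr_W[W\prec X_{2,l}\;\forall l]$; analogously for $X_h$ when $h$ is odd; and for an interior odd $2j+1$ we obtain $F_{2j+1}(X_{2j}, X_{2j+2})^w$ where $F_{2j+1}(X_{2j},X_{2j+2}) = \Pr_W[X_{2j,k}\prec W\prec X_{2j+2,m}\;\forall k,m]$. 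After removing all odd levels, $t(K_{h\times w},P) = \mathbb{E}_{X_{\mathrm{even}}}[(\prod_{i\text{ odd}} F_i)^w]$, and the first Jensen step gives $\geq (\mathbb{E}[\prod_{i\text{ odd}} F_i])^w$.

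Next I would expand each $F_i$ via a fresh uniform auxiliary $W_i\in P$, swap the order of integration, and observe that the dependence on each remaining even level $X_{2j}$ collapses to $\prod_l \mathbf{1}[W_{2j-1}\prec X_{2j,l}\prec W_{2j+1}]$, with the natural modification at a top boundary when $h$ is even. Integrating each $X_{2j}$ produces yet another $w$-th power $(\Pr_Y[W_{2j-1}\prec Y\prec W_{2j+1}])^w$, so $\mathbb{E}[\prod_{i\text{ odd}} F_i]$ takes the form $\mathbb{E}_{W_{\mathrm{odd}}}[(\prod_j \Pr_Y[\cdots])^w]$. A second Jensen application bounds this by $(\mathbb{E}[\prod_j \Pr_Y[\cdots]])^w$, and one final unfolding identifies the inner expectation as the probability that $h$ independent uniform samples arrange into a chain $W_1 \prec Y_2 \prec W_3 \prec Y_4 \prec \cdots$ in $P$, namely $t(C_h,P)$. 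Combining: $t(K_{h\times w},P) \geq (t(C_h,P)^w)^w = t(C_h,P)^{w^2}$.

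The main obstacle is the bookkeeping at the two Jensen steps: at each, the integrand must be a perfect $w$-th power, and it is precisely the chain structure (consecutive-only dependencies) together with the odd/even decomposition that makes this happen. A naive blow-up argument would only yield the much weaker exponent $w^h$; the improvement to $w^2$ comes from the fact that exactly two Jensen steps suffice, one per parity class of levels, paralleling the two convexity steps in the K\H{o}v\'ari--S\'os--Tur\'an counting of stars and common neighbours. The two parities of $h$ must be handled uniformly at the boundary levels $1$ and $h$, but in either case the chain extracted at the final step has exactly $h$ elements.
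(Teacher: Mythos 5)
Your plan is correct and is essentially the paper's own proof: two applications of Jensen's inequality for $x\mapsto x^w$, one per parity class of layers, each preceded by the observation that the $w$ elements of a layer are conditionally independent given the adjacent layers, so that the integrand is a perfect $w$-th power. The only (immaterial) difference is the order of the two steps: you collapse the odd layers first, passing through the intermediate poset $K_{1,w,1,w,\dots}$, whereas the paper collapses the even layers first and uses $K_{w,1,w,1,\dots}$ as the intermediate object.
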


\begin{proof}
The following two claims imply the lemma.

\begin{claim*}
$$ t(K_{w,1,w,1, \dots }, P) \geq t^w(C_{h}, P)$$
\end{claim*}

\begin{proof}
\renewcommand\qedsymbol{$\blacksquare$}
Note that $K_{w,1,w,1, \dots }$ is the union of $w$ edge-disjoint chains of length $h$ intersecting only on the elements of the even layers (where it has only one element), and a mapping of $K_{w,1,w,1, \dots }$ is a homomorphism if and only if its restriction to every chain is a homomorphism. Consider a mapping of the even layers of $K_{w,1,w,1, \dots }$.
The events that the random mapping gives a homomorphism for chains are conditionally independent for disjoint chains (conditioning on the mapping of the even layers).
Hence, the conditional probability that mapping $w$ elements for every odd layer gives a homomorphism of $K_{w,1,w,1, \dots }$ is the $w^{th}$ power of the probability that mapping only one element for every odd layer gives a homomorphism of the chain $C_h$. We use Jensen's inequality to obtain the required result. Now, we describe this argument more formally.

Let $ \left(x_{i,j}\right)_{i\in [h], j\in [w] \text{ for } i \text{ odd}}$ where $x_{i,j} \in P$ and $ \left(x_{i,1}\right)_{i\in [h] \text{ for } i \text{ even}}$ be chosen uniformly and independently at random in $P$.
\begin{equation*}
\resizebox{\linewidth}{!}{\ensuremath{
  \begin{aligned}
        t& (K_{w,1,w,1, \dots }, P)
        = \mathds{P} _ {\substack{\left(x_{i,1}\right)_{i\in [h]} \text{ for }i \text{ even } \\ \left(x_{i,j}\right)_{i\in [h], j\in [w]} \text{ for } i \text{ odd }}} \left( \Bigg.\forall k \in [h-1], \ell \in [w] \quad \substack{\text{ if } k \text{ odd then } x_{k,\ell} \prec x_{k+1,1} \\ \text{ if } k \text{ even then } x_{k,1} \prec x_{k+1,\ell}}\right)\\
        & = \mathds{E} _ {\substack{\left(x_{i,1}\right)_{i\in [h]} \\ i \text{ even }}} \left[ \mathds{P}_{\substack{\left(x_{i,j}\right)_{i\in [h], j\in [w]}\\ i \text{ odd}}} \left( \Bigg.\forall k \in [h-1], \ell \in [w] \quad \substack{\text{ if } k \text{ odd then } x_{k,\ell} \prec x_{k+1,1} \\ \text{ if } k \text{ even then } x_{k,1} \prec x_{k+1,\ell}} \Big | \left(x_{i,1}\right)_{i\in [h]}, i \text{ even } \right)\right]
    \end{aligned}
    }}
\end{equation*}
Here we split $K_{w,1,w,1, \dots }$ into $w$ edge-disjoint copies of $C_{h}$. Since the events corresponding to elements in the same odd layer are independent, we obtain that this equals

\begin{equation*}
    \begin{split}
    & \mathds{E} _ {\substack{\left(x_{i,1}\right)_{i\in [h]} \\ i \text{ even }}} \left[ \mathds{P} _ {\substack{\left(x_{i,1}\right)_{i\in [h]}\\ i \text{ odd}}} \left( \Bigg.\forall k \in [h-1] \quad x_{k,1} \prec x_{k+1,1} \Big | \left(x_{i,1}\right)_{i\in [h]}, i \text{ even } \right)\right]^w\\
    & \geq \left[\mathds{E} _ {\substack{\left(x_{i,1}\right)_{i\in [h]} \\ i \text{ even }}} \mathds{P} _ {\substack{\left(x_{i,1}\right)_{i\in [h]}\\ i \text{ odd}}} \left( \Bigg.\forall k \in [h-1] \quad x_{k,1} \prec x_{k+1,1} \Big | \left(x_{i,1}\right)_{i\in [h]}, i \text{ even } \right)\right]^w\\
    & = \left[\mathds{P} _ {\left(x_{i,1}\right)_{i\in [h]}} \left( \Bigg.\forall k \in [h-1] \quad x_{k,1} \prec x_{k+1,1} \right)\right]^w = t^w(C_{h}, P),
    \end{split}
\end{equation*}
where we have applied Jensen's inequality.
\end{proof}

\begin{claim*}
$$t(K_{h \times w}, P) \geq t^w(K_{w,1,w,1, \dots }, P)$$
\end{claim*}

\begin{proof}
\renewcommand\qedsymbol{$\blacksquare$}
The proof is very similar to the previous one. Now we use the observation that $K_{h \times w}$ is the union of $w$ edge-disjoint copies of $K_{w,1,w,1, \dots }$ intersecting only on the odd layers (where $K_{w,1,w,1, \dots }$ has $w$ elements), and a mapping of $K_{h \times w}$ is a homomorphism if and only if its restriction to every such copy of $K_{w,1,w,1, \dots }$ is a homomorphism. Consider a mapping of the odd layers of $K_{h \times w}$.
The events that the random mapping gives a homomorphism for copies of $K_{w,1,w,1, \dots }$ are conditionally independent for disjoint copies of $K_{w,1,w,1, \dots }$ (conditioning on the mapping of the odd layers).
Hence, the conditional probability that mapping $w$ elements for every even layer gives a homomorphism of $K_{h \times w}$ is the $w^{th}$ power of the probability that mapping only one element for every even layer gives a homomorphism of $K_{w,1,w,1, \dots }$. We use Jensen's inequality again to obtain the required result.

Let $ \left(x_{i,j}\right)_{i\in [h], j\in [w]}$ be chosen uniformly and independently at random in $P$.
\begin{equation*}
\resizebox{\linewidth}{!}{\ensuremath{
  \begin{aligned}
        t& (K_{h \times w}, P) = \mathds{P}_{\left(x_{i,j}\right)_{i\in [h], j\in [w]}}
        \Big( \forall k \in [h-1], \ell, m \in [w] \quad x_{k,\ell} \prec x_{k+1,m} \Big) \\
        & = \mathds{E}_{\substack{\left(x_{i,j}\right)_{i\in [h], j\in [w]} \\ i \text{ odd }}} \left[ \mathds{P}_{\substack{\left(x_{i,j}\right)_{i\in [h], j\in [w]}\\ i \text{ even}}}
        \left( \forall k \in [h-1], \ell, m \in [w] \quad x_{k,\ell} \prec x_{k+1,m} \Big | \left(x_{i,j}\right)_{i\in [h], j\in [w]}, i \text{ odd } \right)\right].
  \end{aligned}}}
\end{equation*}
Here we split $K_{h\times w}$ into $w$ edge-disjoint copies of $K_{w,1,w,1, \dots}$. Since the events corresponding to elements in the same even layer are independent, we obtain that this equals
\begin{equation*}
    \resizebox{\linewidth}{!}{\ensuremath{
    \begin{aligned}
        & \mathds{E}_{\substack{\left(x_{i,j}\right)_{i\in [h], j\in [w]} \\ i \text{ odd }}} \left[ \mathds{P}_{\substack{\left(x_{i, 1}\right)_{i\in [h]}\\ i \text{ even}}}
        \left( \Bigg.\forall k \in [h-1], \ell \in [w] \quad \substack{\text{ if } k \text{ odd then } x_{k,\ell} \prec x_{k+1,1} \\ \text{ if } k \text{ even then } x_{k,1} \prec x_{k+1,\ell}} \Big | \left(x_{i,j}\right)_{i\in [h], j\in [w]}, i \text{ odd } \right)\right]^w\\
        & \geq \left[ \mathds{E}_{\substack{\left(x_{i,j}\right)_{i\in [h], j\in [w]} \\ i \text{ odd }}} \mathds{P}_{\substack{\left(x_{i,1} \right)_{i\in [h]}\\ i \text{ even}}}
        \left( \Bigg.\forall k \in [h-1], \ell \in [w] \quad \substack{\text{ if } k \text{ odd then } x_{k,\ell} \prec x_{k+1,1} \\ \text{ if } k \text{ even then } x_{k,1} \prec x_{k+1,\ell}} \Big | \left(x_{i,j}\right)_{i\in [h], j\in [w]}, i \text{ odd } \right)\right]^w\\
        & = \left[\mathds{P}_{\substack{\left(x_{i,1}\right)_{i\in [h]} \text{ for }i \text{ even } \\ \left(x_{i,j}\right)_{i\in [h], j\in [w]} \text{ for } i \text{ odd }}}
        \left( \Bigg.\forall k \in [h-1], \ell \in [w] \quad \substack{\text{ if } k \text{ odd then } x_{k,\ell} \prec x_{k+1,1} \\ \text{ if } k \text{ even then } x_{k,1} \prec x_{k+1,\ell}}\right)\right]^w = t^w(K_{w,1,w,1, \dots }, P),
    \end{aligned}}}
\end{equation*}
where we have applied Jensen's inequality.
\end{proof}

The lemma follows.
\end{proof}

\begin{proof}(of Theorem~\ref{thm:posetremoval})
Assume that $t(Q,P)< \left(\frac{\varepsilon}{2}\right)^{hw^2}$.
The poset $Q$ is a subposet of $K_{h\times w}$, so Lemma \ref{density} gives
$t^{w^2}(C_{h},P) \leq t(K_{h\times w},P) \leq t(Q,P)$. These yield $t(C_{h},P)< \left(\frac{\varepsilon}{2}\right)^{h}$,
so by Lemma~\ref{lemma:chainremoval} there is a $C_h$-free subposet $P'$ of $P$ obtained by deleting at most $\varepsilon |P|^2$ edges.
\end{proof}

\begin{proof}
(of Theorem~\ref{thm:indistinguish})
If a poset is $C_h$-free, then it is $\mathcal{P}$-free.

In order to prove the other direction, consider a poset $Q \in \mathcal{P}$ with minimal height $h=h(\mathcal{P})$ and (amongst these) minimal width $w=w(\mathcal{P})$. If a poset $P$ is $Q$-free, then there is no injective homomorphism from $Q$ to $P$. The probability that a $Q\rightarrow P$ mapping is not injective is at most $\binom{|Q|}{2} |P|^{-1}$ since a pair of elements in $Q$ should be mapped to the same element in $P$. Thus, $t(Q,P) \leq |P|^{-1}|Q|^2$. Since $|Q| \leq hw$, Theorem~\ref{thm:posetremoval} shows that one can get a $C_h$-free subposet of $P$ by the removal of 
$2 (h^2 w^2)^{\frac{1}{hw^2}}|P|^{-\frac{1}{hw^2}}|P|^2$ edges.
\end{proof}

\section{Comparability graphs}\label{section:cograph}

We will obtain the same theorems for monotone classes of comparability graphs as for posets: the difference will only be in the hidden constants. These allow the same tests as for posets. For a fixed finite graph $F$, the basic test samples $|V(F)|$ vertices and accepts a graph if these do not span an isomorphic copy of $F$. The following removal lemma shows how many iterations we need to reject comparability graphs $\varepsilon$-far from being $F$-free with probability at least one half while always accepting $F$-free comparability graphs.
Similarly to posets, given two finite graphs $F, G$, the probability that a uniform random mapping from $F$ to $G$ is a homomorphism (i.e., edge-preserving) is denoted by $t(F,G)$.

\begin{theorem}\label{thm:cographremovel}[Polynomial removal lemma for comparability graphs]
Consider an $\varepsilon>0$ and a finite graph $F$ that is not an independent set. For every finite comparability graph $G$, if 
$t(F,G) \leq \left(\frac{\varepsilon}{2}\right)^{\chi(F) \alpha(F)^2}$
then there exists an $F$-free (moreover, $K_{\chi(F)}$-free) spanning subgraph of $G$ that is a comparability graph, obtained by deleting at most $\varepsilon |V(G)|^2$ edges.
\end{theorem}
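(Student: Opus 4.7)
The plan is to reduce to Theorem~\ref{thm:posetremoval} applied to the Turán poset $Q = K_{h\times w}$ in an underlying poset $P$ of $G$, where $h = \chi(F) \geq 2$ (which holds because $F$ has an edge) and $w = \alpha(F)$.

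First I would fix any finite poset $P$ whose comparability graph is $G$, and then establish the chain of inequalities
\[ t(K_{h\times w}, P) \leq t(T_{h,w}, G) \leq t(F, G), \]
where $T_{h,w}$ denotes the complete $h$-partite graph with parts of size $w$ (equivalently, the comparability graph of $K_{h\times w}$). The first inequality holds because any order-preserving map $\phi : K_{h\times w} \to P$ is automatically a graph homomorphism $T_{h,w} \to G$: two elements in different layers of $K_{h\times w}$ are forced to be comparable in $P$, hence adjacent in $G$. The second inequality uses a proper $h$-coloring of $F$ whose color classes have size at most $\alpha(F) = w$; padding $V(F)$ with $hw - |V(F)|$ isolated vertices realizes $F$ as a spanning subgraph $\tilde F$ of $T_{h,w}$, so every homomorphism $T_{h,w} \to G$ is also a homomorphism $\tilde F \to G$, and adding isolated vertices preserves homomorphism density.

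Combined with the hypothesis, the chain yields $t(K_{h\times w}, P) \leq (\varepsilon/2)^{hw^2}$, matching the hypothesis of Theorem~\ref{thm:posetremoval} for $Q = K_{h\times w}$ (which has height $h$ and width $w$); a tiny $\varepsilon$-perturbation handles the strict-versus-non-strict discrepancy. The theorem then yields a $C_h$-free subposet $P'$ of $P$ obtained by removing at most $\varepsilon|P|^2$ comparable pairs. I would take $G'$ to be the comparability graph of $P'$: it is a comparability graph by construction, a spanning subgraph of $G$ with at most $\varepsilon|V(G)|^2$ edges removed (each deleted poset relation kills exactly one comparability edge), and $K_h$-free because the longest chain of $P'$ has at most $h-1$ elements. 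Since comparability graphs are perfect, $\chi(G') = \omega(G') \leq h-1 < \chi(F)$; any subgraph copy of $F$ inside $G'$ would inherit a proper $(h-1)$-coloring from $G'$ and contradict $\chi(F) = h$, so $G'$ is in fact $F$-free, giving the promised ``moreover'' conclusion.

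The serious combinatorial content of the argument is already packaged inside Theorem~\ref{thm:posetremoval} (which itself invokes Lemma~\ref{density}); the new work specific to comparability graphs is only the transfer chain above, plus the perfectness step that upgrades $K_h$-freeness to $F$-freeness. The one minor wrinkle is that $G$ may have several underlying posets, but since the reduction only needs one, any choice suffices.
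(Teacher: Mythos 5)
Your proposal is correct and follows essentially the same route as the paper: embed $F$ into the Tur\'an graph $T_{\chi(F),\alpha(F)}$ via a proper coloring, pass to the density of $K_{\chi(F)\times\alpha(F)}$ in an underlying poset $P$, and invoke Theorem~\ref{thm:posetremoval} to get a $C_{\chi(F)}$-free subposet whose comparability graph is the desired subgraph. Your explicit perfectness/Mirsky step upgrading $K_{\chi(F)}$-freeness to $F$-freeness, and your remark on the strict-versus-non-strict inequality, are minor elaborations of points the paper leaves implicit.
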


\begin{proof}
The graph $F$ is a subgraph of the multipartite Tur\'an graph $T$ with $\chi(F)$ classes each of size $\alpha(F)$, hence $t(F,G) \geq t(T,G)$.
There exists a poset $P$ with comparability graph $G$. The height of the poset $P$ is exactly the chromatic number of $G$, and the width of the poset $P$ equals the independence number of $G$. 

Note that $t(T,G) \geq t(K_{\chi(F) \times \alpha(F)},P)$, since we may assume that $T$ is the comparability graph of $K_{\chi(F) \times \alpha(F)}$, hence
every homomorphism of $K_{\chi(F) \times \alpha(F)}$ to $P$ is a comparability-preserving map from $T$ to $G$, i.e., a graph homomorphism.
By Theorem~\ref{thm:posetremoval} 
there exists a $C_{\chi(F)}$-free subposet of $P$ obtained by deleting at most $\varepsilon|P|^2$ edges, and its comparability graph satisfies the conditions of the theorem.
\end{proof}

Note that we did not need to know the underlying poset $P$ to prove the existence of the desired subgraph of $G$.

Given a set of (possibly infinitely many) finite graphs $\mathcal{F}$, we define the chromatic number $\chi(\mathcal{F})$ and the independence number $\alpha(\mathcal{F})$ as follows.
\[\chi(\mathcal{F}) = \min_{F\in \mathcal{F}} \chi(F) \quad \quad \quad \alpha(\mathcal{F}) = \min_{\substack{F\in \mathcal{F}:\\ \chi(F)=\chi(\mathcal{F})}} \alpha(F).\]

\begin{corollary}\label{thm:cograph_easy} [Easy testability for monotone classes of comparability graphs]
Consider a family of finite graphs $\mathcal{F}$ and a graph $F \in \mathcal{F}$ with chromatic number $\chi(\mathcal{F})\geq 2$ and independence number $\alpha(\mathcal{F})$.
For every $\varepsilon>0$ and finite comparability graph $G$, if 
$t(F,G) \leq \left(\frac{\varepsilon}{2}\right)^{\chi(\mathcal{F})\alpha(\mathcal{F})^2}$ then there exists an $\mathcal{F}$-free (moreover, $K_{\chi(\mathcal{F})}$-free) spanning subgraph of G, that is a comparability graph, obtained by deleting at most $\varepsilon |V(G)|^2$ edges.
\end{corollary}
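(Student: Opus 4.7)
The plan is to mimic the derivation of Corollary~\ref{cor:posettesting} from Theorem~\ref{thm:posetremoval}: I will use Theorem~\ref{thm:cographremovel} in place of the poset removal lemma, and invoke perfection of comparability graphs in the role previously played by the fact that every poset in $\mathcal{P}$ contains the chain $C_{h(\mathcal{P})}$.

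First, I apply Theorem~\ref{thm:cographremovel} directly to the distinguished witness $F \in \mathcal{F}$. Since $\chi(F) = \chi(\mathcal{F})$ and $\alpha(F) = \alpha(\mathcal{F})$, the assumption $t(F,G) \leq (\varepsilon/2)^{\chi(\mathcal{F})\alpha(\mathcal{F})^2}$ is exactly the hypothesis of Theorem~\ref{thm:cographremovel}. This produces a spanning subgraph $G'$ of $G$ which is a $K_{\chi(\mathcal{F})}$-free comparability graph, obtained by the deletion of at most $\varepsilon |V(G)|^2$ edges.

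It remains to verify that such a $G'$ is automatically $\mathcal{F}$-free. Comparability graphs are perfect, so $\chi(G') = \omega(G') \leq \chi(\mathcal{F}) - 1$ because $G'$ is $K_{\chi(\mathcal{F})}$-free. On the other hand, every $F' \in \mathcal{F}$ satisfies $\chi(F') \geq \chi(\mathcal{F}) > \chi(G')$, and the chromatic number is monotone under subgraph inclusion. Therefore no $F' \in \mathcal{F}$ can appear as a (not necessarily induced) subgraph of $G'$, so $G'$ is $\mathcal{F}$-free, and in particular $K_{\chi(\mathcal{F})}$-free as stated.

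There is no genuine obstacle here; both steps are short. The only ingredient beyond Theorem~\ref{thm:cographremovel} is the perfection of comparability graphs, which plays here exactly the role that the containment $C_{h(\mathcal{P})} \subseteq Q$ for every $Q \in \mathcal{P}$ played in the proof of Corollary~\ref{cor:posettesting}.
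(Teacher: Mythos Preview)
Your proof is correct and matches the paper's intended derivation: apply Theorem~\ref{thm:cographremovel} to the witness $F$ to obtain a $K_{\chi(\mathcal{F})}$-free comparability subgraph $G'$, then argue that $G'$ is $\mathcal{F}$-free via a chromatic-number comparison. The only cosmetic difference is that where you invoke perfection of comparability graphs to get $\chi(G')=\omega(G')\le\chi(\mathcal{F})-1$, the paper (see the proof of Theorem~\ref{thm:cograph_distinguishable}) phrases the same fact as the dual of Dilworth's theorem applied to the underlying poset; the content is identical.
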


We also give a classification of monotone classes of comparability graphs as we did for posets. Two properties $\Phi_1$ and $\Phi_2$ of graphs are {\it indistinguishable} if for every $\varepsilon>0$ and $i=1,2$ there exists $N$ such that for every graph $G$ on at least $N$ vertices with property $\Phi_i$ there exists a graph $G'$ on the same vertex set with property $\Phi_{3-i}$, obtained by changing at most $\varepsilon |V(G)|^2$ edges of $G$.
Since we are interested in monotone properties, we only need to allow deleting edges.

\begin{theorem}\label{thm:cograph_distinguishable} [Indistinguishability]
Consider a family of finite graphs $\mathcal{F}$. Set $\chi=\chi(\mathcal{F}) \geq 2, \alpha = \alpha(\mathcal{F})$. Comparability graphs with chromatic number at most $(\chi-1)$
are indistinguishable from $\mathcal{F}$-free comparability graphs. Namely, every comparability graph with chromatic number at most $(\chi-1)$ is $\mathcal{F}$-free, and every $\mathcal{F}$-free comparability graph admits a spanning subgraph with chromatic number at most $(\chi-1)$, that is a comparability graph, obtained by the removal of at most
$2 \left(\frac{\chi^2 \alpha^2}{|V(G)|}\right)^{\frac{1}{\chi \alpha^2}}|V(G)|^2$
edges. 
\end{theorem}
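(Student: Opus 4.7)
The plan is to mimic the proof of Theorem~\ref{thm:indistinguish} by passing to an underlying poset. The easy direction is immediate: if a comparability graph $G$ satisfies $\chi(G)\leq\chi-1$, then it cannot contain any $F\in\mathcal{F}$ as a subgraph, since every such $F$ has $\chi(F)\geq\chi$; hence $G$ is automatically $\mathcal{F}$-free.

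For the quantitative direction, let $G$ be an $\mathcal{F}$-free comparability graph with $n=|V(G)|$ vertices, and let $P$ be any poset whose comparability graph is $G$. Pick $F\in\mathcal{F}$ realising the minima $\chi(F)=\chi$ and $\alpha(F)=\alpha$. Following the observation already used in the proof of Theorem~\ref{thm:cographremovel}, $F$ is a subgraph of the Tur\'an graph $T$ with $\chi$ classes of size $\alpha$, and $T$ is the comparability graph of the complete multipartite poset $K_{\chi\times\alpha}$. Hence if $K_{\chi\times\alpha}$ embedded into $P$ as a subposet, its image would carry $T$---and in particular $F$---as a subgraph of $G$, contradicting $\mathcal{F}$-freeness. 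Therefore $P$ is $K_{\chi\times\alpha}$-free.

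I then apply Theorem~\ref{thm:indistinguish} to the singleton family $\mathcal{P}=\{K_{\chi\times\alpha}\}$, for which $h(\mathcal{P})=\chi$ and $w(\mathcal{P})=\alpha$. This yields a $C_\chi$-free subposet $P'$ of $P$ obtained by deleting at most $2\bigl(\chi^2\alpha^2/n\bigr)^{1/(\chi\alpha^2)}\,n^2$ edges. Its comparability graph $G'$ is a spanning comparability subgraph of $G$ with exactly the same number of edges removed, since the correspondence between poset edges and comparability edges is bijective. Finally, because $P'$ is $C_\chi$-free, Mirsky's theorem partitions $P'$ into at most $\chi-1$ antichains, producing a proper $(\chi-1)$-colouring of $G'$, as required.

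The main obstacle is the translation from $\mathcal{F}$-freeness of $G$---a graph-level hypothesis about subgraphs that need not themselves be comparability graphs---into a subposet-free condition on $P$. Sandwiching $F$ between $K_{\chi\times\alpha}$ at the poset level and $T$ at the graph level resolves this cleanly, reducing everything to a single application of the poset indistinguishability theorem, with no need to reprove the removal lemma in the comparability-graph setting.
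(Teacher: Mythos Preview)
Your proof is correct and follows essentially the same route as the paper: pass to an underlying poset $P$, invoke Theorem~\ref{thm:indistinguish} to obtain a $C_\chi$-free subposet $P'$, and read off that its comparability graph $G'$ has $\chi(G')\le\chi-1$ via Mirsky's theorem (which the paper calls the dual of Dilworth). The paper is terser about the translation step---it simply asserts that Theorem~\ref{thm:indistinguish} applies---whereas you make explicit the choice $\mathcal{P}=\{K_{\chi\times\alpha}\}$ and verify that $P$ is $K_{\chi\times\alpha}$-free via the $F\subseteq T$ sandwich; this is a welcome clarification rather than a different argument.
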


\begin{proof}
Clearly, every comparability graph with chromatic number at most $(\chi-1)$ is $\mathcal{F}$-free. On the other hand, given an $\mathcal{F}$-free comparability graph $G$, consider a poset $P$ whose comparability graph is $G$. Theorem~\ref{thm:indistinguish} implies that there is a $C_{\chi}$-free subposet $P'$ obtained by the removal of at most $2 \left(\frac{\chi^2 \alpha^2}{|V(G)|}\right)^{\frac{1}{\chi \alpha^2}}|V(G)|^2$ edges. The comparability graph $G'$ of $P'$ is the desired spanning subgraph of $G$: it is $K_{\chi}$-free, since $P'$ is $C_{\chi}$-free. Hence, $\chi(G') \leq \chi-1$ by the dual of the Dilworth theorem.
\end{proof}

The analog of Algorithm~\ref{test2} is the test sampling a random set of vertices and accepting the graph if the subgraph spanned by them is $K_{\chi}$-free. We need the same number of samples as in the case of posets. The following theorem is a straightforward consequence of Theorem~\ref{thm:optsample}.

\begin{theorem} \label{thm:cograph4}
[On the subgraph test]
Let $\chi\geq 2$ be an integer,
$\varepsilon>0, c>0$ and $G$ a finite comparability graph. If $G$ is $\varepsilon$-far from being $K_{\chi}$-free then a random subset of $\left\lceil \frac{4\log(\chi)+4c+1}{2\varepsilon} \right\rceil$ vertices chosen independently and uniformly at random contains a copy of $K_{\chi}$ with probability at least $1-e^{-c}$.
\end{theorem}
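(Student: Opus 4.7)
The plan is to reduce Theorem~\ref{thm:cograph4} to Theorem~\ref{thm:optsample} by passing to an underlying poset. Since $G$ is a comparability graph, fix a poset $P$ on $V(G)$ whose comparability graph is $G$. The key observation is that the subsets of $V(G)$ inducing a copy of $K_\chi$ in $G$ are exactly the subsets of $V(P)$ carrying a chain $C_\chi$ in $P$: every clique in a comparability graph is necessarily totally ordered by $\prec$, and conversely every chain in $P$ becomes a clique in $G$.

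The next step is to transfer the $\varepsilon$-farness hypothesis from $G$ to $P$. I will argue by contrapositive: suppose $P$ were \emph{not} $\varepsilon$-far from being $C_\chi$-free, so that some $C_\chi$-free subposet $P'$ of $P$ can be produced by removing at most $\varepsilon|V(P)|^2$ edges. Then the comparability graph $G'$ of $P'$ is a spanning subgraph of $G$, it differs from $G$ by at most $\varepsilon|V(G)|^2$ edges, and by the correspondence above it is $K_\chi$-free. This contradicts the assumption that $G$ is $\varepsilon$-far from $K_\chi$-free. Hence $P$ is $\varepsilon$-far from being $C_\chi$-free.

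Now I apply Theorem~\ref{thm:optsample} with $h=\chi$ to $P$: a uniformly random subset $X\subseteq V(P)$ of size $\lceil (4\log\chi+4c+1)/(2\varepsilon)\rceil$ contains a copy of $C_\chi$ with probability at least $1-e^{-c}$. Since $V(P)=V(G)$ and the sampling distributions coincide, the same set $X$, viewed as a random subset of $V(G)$, induces a $K_\chi$ in $G$ with probability at least $1-e^{-c}$, which is exactly the statement of Theorem~\ref{thm:cograph4}.

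There is really no obstacle beyond carefully stating the chain/clique correspondence and checking that the edge counts used for $\varepsilon$-closeness agree (which they do, since $|V(G)|=|V(P)|$ and an edge of $P$ is removed iff the corresponding edge of $G$ is removed). The whole argument is a direct reduction, and the non-trivial combinatorial work has already been done inside the proof of Theorem~\ref{thm:optsample} via the rank function of Algorithm~\ref{alg:rank}.
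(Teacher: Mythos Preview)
Your proof is correct and follows exactly the route the paper intends: the paper states only that Theorem~\ref{thm:cograph4} is a ``straightforward consequence of Theorem~\ref{thm:optsample}'', and your reduction via an underlying poset $P$, the chain/clique correspondence, and the contrapositive transfer of $\varepsilon$-farness is precisely the straightforward argument being alluded to. There is nothing to add.
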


The comparability graph of the poset in Proposition~\ref{prop:subposet_sharp} shows that for any fixed $h$, this bound has the right order of magnitude in $\varepsilon$. As in the case of posets, we can also use the test for $K_{\chi(\mathcal{F})}$-free subgraphs to test a monotone class of comparability graphs $\mathcal{F}$: the probability that we reject an $\mathcal{F}$-free comparability graph is negligible.

\section{Acknowledgements}

Panna Tímea Fekete's Project No.\ 1016492.\ has been implemented with the support provided by the Ministry of Culture and Innovation of Hungary from the National Research, Development and Innovation Fund, financed under the KDP-2020 funding scheme and supported by the ERC Synergy Grant No.\ 810115 -- DYNASNET.
Gábor Kun was supported by the Hungarian Academy of Sciences Momentum Grant no. 2022-58 and the ERC Advanced Grant ERMiD.

\end{document}